
\documentclass[10pt]{article}
\usepackage{amsmath}
\usepackage{amsfonts}
\usepackage{graphicx}
\usepackage{xcolor}

\setcounter{MaxMatrixCols}{10}

\newtheorem{theorem}{Theorem}
\newtheorem{proposition}[theorem]{Proposition}
\newenvironment{proof}[1][Proof]{\noindent\textbf{#1.} }{\ \rule{0.5em}{0.5em}}

\newtheorem{lem}{Lemma}
\newtheorem{defn}{Definition}
\newtheorem{rem}{Remark}

\pagestyle{myheadings}

\setlength{\textwidth}{16.0cm} \setlength{\textheight}{8.7in}
\oddsidemargin -0.12in \topmargin -0.2in \headsep 0.4in
\begin{document}

\title{{\huge Numerical approximation of the averaged controllability for the wave equation with unknown velocity of propagation}}
\author{Mouna Abdelli\thanks{Laboratory of Mathematics Informatics and Systems (LAMIS), University of Larbi Tebessi, 12002 Tebessa, Algeria (mouna.abdelli@univ-tebessa.dz)} \; and \; Carlos Castro\thanks{Departamento de Matemática e Informática, ETSI Caminos, Canales y Puertos, Polytecnical University of Madrid, 28040 Madrid, Spain (carlos.castro@upm.es)}}
\date{}
\maketitle

\textbf{Abstract: We propose a numerical method to approximate the exact averaged boundary control of a family of wave equations depending on an unknown parameter $\sigma$. More precisely the control, independent of $\sigma$, that drives an initial data to a family of final states at time $t=T$, whose average in $\sigma$ is given. The idea is to project the control problem in the finite dimensional space generated by the first $N$ eigenfunctions of the Laplace operator. {When applied to a single (nonparametric) wave equation, the resulting discrete control problem turns out to be equivalent to the Galerkin approximation proposed by F. Bourquin et al. in \cite{BBL}}. We give a convergence result of the discrete controls to the continuous one. {The method is illustrated with several examples in 1-d and 2-d in a square domain and allows us to give some conjectures on the averaged controllability for the continuous problem.}}


\textbf{Keywords: Exact control, Numerical approximation, Averaged control, Projection method}.

\section{Introduction}

Consider the wave equation with a missing parameter $\sigma $ and
a control $f$ acting on one part of the boundary:%
\begin{equation}
\left\{ 
\begin{array}{l}
u_{tt}-a(\sigma )\Delta u=0 \\ 
u=f\chi _{\Gamma_{0}} \\ 
u(x,0)=u^{0},\quad u_{t}(x,0)=u^{1}%
\end{array}%
\begin{array}{c}
in\text{ }Q \\ 
on\text{ }\Sigma \\ 
in\text{ }\Omega%
\end{array}%
\right.  \label{eq.1}
\end{equation}%
where $\Omega $ is an open bounded domain in $\mathbb{R}^{d}$ with smooth
boundary $\Gamma $ $,\Gamma_{0}$ be an open nonempty subset of $\Gamma ,$ $\chi_{\Gamma_0}$ the characteristic function of the set $\Gamma_0$, $%
t\in \left[ 0,T\right] ,T>0,Q=\Omega \times (0,T),a(\sigma )\in L^\infty$ is the square of the
velocity of propagation and we assume
$$
0<a_m\leq a(\sigma)\leq a_M<\infty, \quad  \sigma \in \Upsilon \subset \mathbb{R},
$$   
for some constants $a_m,a_M$ and $\Upsilon$ an interval of $\mathbb{R}$. 
The function $f=f(x,t)$ is a control, independent of the unknown parameter $\sigma$ and which acts on $\Gamma _{0}$. 

For each value of the parameter $\sigma\in \Upsilon$, problem (\ref{eq.1}) has a unique solution $(u,u_{t})\in C(\left[ 0,T\right]
,L^{2}\left( \Omega \right) \times H^{-1}\left( \Omega \right) )$ for any $%
f\in L^{2}\left( (0,T)\times \Gamma_{0}\right) $ and $(u^{0},u^{1})\in
L^{2}\left( \Omega \right) \times H^{-1}\left( \Omega \right) $. Of course, these solutions will
depend on the parameter $\sigma \in \Upsilon$ and we will write $%
u(x,t;\sigma)$, when we want to make this dependence explicit.

Moreover, there exists $C=C(T)>0$, independent of $\sigma\in \Upsilon$, such
that the solution of (\ref{eq.1}) satisfies the following inequality%
\begin{equation}  \label{eq.2}
\left\Vert (u,u_{t})\right\Vert _{L^{\infty }(\left[ 0,T\right] ;L^{2}\left(
\Omega \right) \times H^{-1}\left( \Omega \right) )}\leq C\left( \left\Vert
(u^{0},u^{1})\right\Vert _{L^{2}\left( \Omega \right) \times H^{-1}\left(
\Omega \right) }+\left\Vert f\right\Vert _{L^{2}\left( (0,T)\times \Gamma
_{0}\right) } \right).
\end{equation}

We are interested in the numerical approximation of the following controllability problem: Given $%
T>0$, the initial data $(u^0,u^1) \in L^2(\Omega)\times H^{-1}(\Omega)$ and a final
target $(u^0_T,u^1_T) \in L^2(\Omega)\times H^{-1}(\Omega)$, find $f\in
L^2(0,T;\Gamma_0)$ such that the solution of (\ref{eq.1}) satisfies 
\begin{equation}  \label{eq.3}
\int_\Upsilon u(x,T;\sigma) d\sigma =u^0_T,\quad \int_\Upsilon
u_t(x,T;\sigma) d\sigma =u^1_T, \quad x\in \Omega.
\end{equation}
When such a control exists we say that the initial data $(u^{0},u^{1})\in
L^{2}\left( \Omega \right) \times H^{-1}\left( \Omega \right) $ is  \textbf{%
controllable in average} to the target $(u^0_T,u^1_T)$.

This notion of controllability in average (or averaged controllability) was first introduced by E. Zuazua in \cite{Zua1}, where sufficient conditions were given for the controllability of finite dimensional systems. In this work the author also highlight the difficulty to extend classical controllability results for PDE to obtain averaged controls. Precisely, the fact that the dynamics of the average is no longer solution of the same PDE. There have been a considerable effort to overcome this difficulty and obtain results in this direction for different distributed systems (see for example \cite{re30}, \cite{re40}). For the particular case of the wave equation, we can address to \cite{re24} where the authors consider two different wave equation with the same control, or \cite{LohZ} where a more general family of wave equations is considered. More precisely, they deal with a parametric family  of wave equations where the parameter is a measure perturbation of a Dirac mass. 

{
Control problems for parameter families of partial differential equations have attracted a considerable interest in the last years due to their applicability in engineering processes, where uncertainty in one or several parameters of the model is common. In this case, it is natural to look for controls valid for any value of the unknown parameter $\sigma\in \Upsilon$ and therefore, independent of $\sigma$. In general, such controls are not likely to produce solutions which attain the same target for all values of the parameters, as for instance in our case $(u(x,T;\sigma), u_t(x,T;\sigma))=(u^0_T,u^1_T)$ for all values of $\sigma$. Instead, a weakened objective must be considered. Here we focus on the notion of control in average for which the target is the average in $\sigma$ of the solutions at time $t=T$. 
The controllability in average is only one of the multiple ways to deal with an unknown parameter. There are other notions in the literature according to specific applications as robust control, where one looks for an approximate control valid for a range of parameters, or the risk averse control where the control tries to avoid some undesirable behaviors in the solutions. We refer to \cite{Periago} and the references therein where such problems are considered in the context of the optimal control.  }

To illustrate the mathematical difficulties behind the particular situation we consider in this work, i.e. the control in average for the wave equation, we mention that for the one-dimensional model with the simplest choice $a(\sigma)=\sigma,$ $\sigma\in [\sigma_0,\sigma_1]$, the existence of controls is still open. A partial result is given in \cite{LohZ} assuming, among other properties, that $\sigma_0^{-1}\sigma_1\notin \mathbb{Q},$ which does not seem a natural hypothesis in this context. 

In this paper we do not address this difficult problem. We rather focus on the numerical approximation of the averaged controls, assuming that they exist. However, this is also far from being a simple exercise since the usual approach based on the discretization of the controlled wave equation via finite differences or finite elements does not inherit the properties of the continuous model. Even for a single wave equation (i.e. without a parameter dependence), the discrete controls obtained for the associated finite dimensional systems may become unbounded as the discretization parameter goes to zero. This was first observed in \cite{gll} where the authors considered a finite dimensional version of the Hilbert Uniqueness Method introduced in \cite{Li}. Since then, several cures have been proposed to recover convergence approximations of the controls as bigrid algorithms, Tychonoff regularization, filtering, mixed finite elements, etc (see for instance \cite{GLH}, \cite{CMM}, \cite{EZ}). We refer to the review \cite{ZuaN} for a detailed description of such problems and references. 

{More recently, a direct approximation of the  optimality system associated to the continuous controllability problem with a conformal finite element method has proven to be convergent in quite general situations as general geometries, nonconstant coefficients and systems (see \cite{CMu}, \cite{CFM}). In this approach, a suitable Lagrangian formulation is introduced where the controlled and adjoint equation are imposed as a constraint with a suitable Lagrange multiplier. Following a similar  Lagrangian approach, a convergence result for nonconformal finite elements has been recently obtained in \cite{BFO}.}  

{
Here we follow a different approach based on a projection method of the control problem on the finite dimensional subspace constituted by the first eigenfunctions of the Laplacian. Then, we apply the Hilbert Uniqueness Method to characterize the minimal $L^2$-norm controls. It turns out that the variational formulation of the resulting finite dimensional control problem can be interpreted as a Galerkin approximation for the continuous one. 
For nonparametric PDE's such Galerkin approximation was proposed by F. Bourquin and co-workers in the nineties to approximate the boundary control (see \cite{BBL}, \cite{B1}, \cite{B2}). Here we adapt it to obtain averaged controls.}  As in the nonparametric case, the existence and convergence of controls can be deduced by the usual theory of Galerkin approximations for variational problems. The finite dimensional control problem can be solved using the finite dimensional theory introduced in \cite{Zua1} which provides an explicit expression for the control in average. 

The main advantage of this approach is that it provides a simpler and efficient method to obtain
numerical approximations of controls without any regularization technique. Moreover, it can be easily adapted to parametric systems as (\ref{eq.1}) that require to compute a large number of control problems to approximate the averages involved. In fact, it can be used in any situation where the control can be characterized variationally (simultaneous control, parametric regional control \cite{AH}, etc.).  An important drawback is that it requires to compute the eigenfunctions of the Laplace operator. {This is simple in one-dimensional  problems or two dimensional ones in special domains (rectangles or disks), but not for general domains or variable coefficients equations. For general domains, there is an added difficulty in the approximation of the normal derivatives in the variational characterization of the controls (see formula (\ref{eq.cara_dis_con}) below). In \cite{BBL} a second projection is proposed for this approximation, this time on the subspace generated by the eigenvectors of the associated capacitance operator on the boundary. We give an alternative method that avoids to approximate this eigenvalue problem.} 

{To illustrate the method we present some numerical experiments in one dimension and two dimensions in a square domain. We also give numerical evidences of several unknown properties for the dynamics of the average and the control in average. In particular we find the following:
\begin{enumerate}
\item The wave equation is controllable in average, with no restrictions on the parameters interval, as long as the time $T$ is sufficiently large. In particular this is true when the wave equation is controllable for all values of $\sigma \in \Upsilon$, but it is also true even when this property holds only for a subinterval in $\Upsilon$.  
\item The dynamics of the average decays in time, without control, to a constant which depends on the initial data. When we apply a control, it takes advantage of this decay and acts mainly at the end of the time interval.
\item The averaged control converges to the control of the averaged parameter when the parameter belongs to an interval of length $\varepsilon \to 0$.
\end{enumerate}
}

The rest of the paper is divided as follows: in section \ref{sec2} we adapt the usual variational characterization of controls for the wave equation to the control in average. In section \ref{sec3} we introduce the observability inequality that allows to prove the existence of controls and a particular class of controls that is obtained by minimization of a suitable functional. In section \ref{sec4} we introduce the numerical method. In section \ref{sec5} we prove the convergence of discrete controls to the continuous one. In section \ref{sec6} we deduce matrix formulation equivalent to the discrete system and the controllability of the finite dimensional system. Finally, in section \ref{sec7} we present some numerical examples in one dimension and in two dimensions on a square domain.

\section{Variational characterization of the control in average}
\label{sec2}

In this section we introduce a variational characterization of the controls in average that we use later to find their numerical approximation. In particular we prove that a class of controls in average can be obtained as minimizers of a quadratic functional defined on a Hilbert space. The results of this section are not new and can be found in \cite{re30} (appendix A.2). We include them here for completeness. 

For technical reasons we restrict ourselves to controls which are zero near $t=0,T$. This affects to the quadratic functional that we define below. We follow the approach in \cite{MiZu} for the wave equation, that we adapt to the parametric system (\ref{eq.1}) for the control in average.  

Let us consider the following backwards wave equation, for each $\sigma \in \Upsilon$, 
\begin{equation}
\left\{ 
\begin{array}{l}
\phi _{tt}-a(\sigma )\Delta \phi =0 \\ 
\phi =0 \\ 
\phi (x,T)=\phi ^{0},\varphi _{t}(x,T)=\phi ^{1}%
\end{array}%
\begin{array}{c}
in\text{ }Q \\ 
on\text{ }\Sigma \\ 
in\text{ }\Omega%
\end{array}%
\right.  \label{eq.6}
\end{equation}%
where $(\phi ^{0},\phi ^{1})\in H_{0}^{1}\left( \Omega \right) \times
L^{2}\left( \Omega \right) $ are independent of $\sigma $.

We also define the duality product between $L^{2}(\Omega)\times H^{-1}(\Omega )$ and 
$H_{0}^{1}(\Omega)\times L^{2}(\Omega )$ by 
\begin{equation*}
\left\langle (\phi ^{0},\phi ^{1}),(u^{0},u^{1})\right\rangle =\int_{\Omega
}u^{0}\phi ^{1}d\gamma-\left\langle u^{1},\phi ^{0}\right\rangle _{-1,1},
\end{equation*}%
where $\left\langle \cdot ,\cdot \right\rangle _{-1,1}$ is the usual duality
product between $H_{0}^{1}$ and its dual space $H^{-1}$.

The following result provides a variational characterization of the control in average.

\begin{lem}
\label{le.1} Assume that for $T>0$ and the data  $(u^0,u^1), (u^{0}_T,u^1_T) \in L^2\times H^{-1}$ there exists a control function $f\in L^{2}\left(
(0,T)\times \Gamma _{0}\right) $ for which the solutions of (\ref{eq.1}) satisfy (\ref{eq.3}). Then, $f$ is solution of the variational
identity 
\begin{eqnarray}
&&\int_{0}^{T}\int_{\Gamma _{0}}\int_{\Upsilon }a(\sigma)\frac{\partial \phi }{%
\partial \upsilon }d\sigma f d\gamma dt-\left\langle \left( \int_{\Upsilon }\phi
(\cdot ,0;\sigma )d\sigma ,\int_{\Upsilon }\phi _{t}(\cdot ,0;\sigma
)d\sigma \right) ,(u^{0},u^{1})\right\rangle  \notag \\
&&+\left\langle (\phi ^{0},\phi ^{1}),(u^0_T,u^1_T)\right\rangle =0,
\label{eq.7}
\end{eqnarray}%
for all $(\phi ^{0},\phi ^{1})\in H_{0}^{1}\times L^{2}(\Omega )$, where $%
(\phi ,\phi _{t})$ is the solution of the backwards wave equation (\ref{eq.6}%
). Reciprocally, if $f$ satisfies (\ref{eq.7}) then (\ref{eq.3}) holds.
\end{lem}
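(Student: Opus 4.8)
The plan is to obtain (\ref{eq.7}) from the transposition (weak) formulation of (\ref{eq.1}), exactly as in the nonparametric case but carried out for each value of the parameter and then averaged in $\sigma$. For a fixed $\sigma\in\Upsilon$ I would multiply the equation $u_{tt}-a(\sigma)\Delta u=0$ by the solution $\phi(\cdot,\cdot;\sigma)$ of the backwards adjoint problem (\ref{eq.6}) and integrate over $Q$. Integrating by parts twice in time and applying Green's formula twice in space, and using that $u$ and $\phi$ solve the same equation $w_{tt}-a(\sigma)\Delta w=0$, all the interior integrals cancel and only boundary contributions remain. To keep every manipulation classical I would first assume smooth data $(u^{0},u^{1})$, $(\phi^{0},\phi^{1})$ and a smooth control $f$, and justify the general case afterwards by density.

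Collecting the surviving terms, the lateral condition $\phi=0$ on $\Sigma$ annihilates the term carrying $\frac{\partial u}{\partial\upsilon}$, while $u=f\chi_{\Gamma_0}$ on $\Sigma$ turns the remaining lateral term into $\int_0^T\int_{\Gamma_0} a(\sigma) f\,\frac{\partial\phi}{\partial\upsilon}\,d\gamma\,dt$. The temporal boundary terms at $t=0$ and $t=T$, evaluated with $\phi(\cdot,T)=\phi^{0}$, $\phi_t(\cdot,T)=\phi^{1}$ and the data $(u^{0},u^{1})$, reassemble --- using the definition of the duality pairing $\langle\cdot,\cdot\rangle$ --- into the pairing of $(\phi(\cdot,0;\sigma),\phi_t(\cdot,0;\sigma))$ with $(u^{0},u^{1})$ and the pairing of $(\phi^{0},\phi^{1})$ with the final state $(u(\cdot,T;\sigma),u_t(\cdot,T;\sigma))$. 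This produces, for each $\sigma$, an identity of the same shape as (\ref{eq.7}) but with the actual final state in place of the target. I would then integrate this identity in $\sigma$ over $\Upsilon$; since $f$, $\phi^{0}$ and $\phi^{1}$ do not depend on $\sigma$, the average only reaches $a(\sigma)\frac{\partial\phi}{\partial\upsilon}$, the two traces $\phi(\cdot,0;\sigma)$, $\phi_t(\cdot,0;\sigma)$ at $t=0$, and the final states, and replacing the averaged final state by $(u^0_T,u^1_T)$ through (\ref{eq.3}) yields precisely (\ref{eq.7}).

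For the converse I would note that the single-$\sigma$ identity above holds verbatim for the solution $u$ generated by an arbitrary $f\in L^{2}((0,T)\times\Gamma_0)$, regardless of whether (\ref{eq.3}) is satisfied; averaging it in $\sigma$ gives (\ref{eq.7}) with $\left(\int_\Upsilon u(\cdot,T;\sigma)\,d\sigma,\int_\Upsilon u_t(\cdot,T;\sigma)\,d\sigma\right)$ in place of $(u^0_T,u^1_T)$. Subtracting this from the assumed identity (\ref{eq.7}) leaves
\[
\left\langle (\phi^{0},\phi^{1}),\left( \int_\Upsilon u(\cdot,T;\sigma)\,d\sigma-u^0_T,\ \int_\Upsilon u_t(\cdot,T;\sigma)\,d\sigma-u^1_T \right)\right\rangle =0
\]
for every $(\phi^{0},\phi^{1})\in H_0^{1}(\Omega)\times L^{2}(\Omega)$. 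Since the pairing between $L^{2}\times H^{-1}$ and $H_0^{1}\times L^{2}$ is nondegenerate (take $\phi^{1}$ free with $\phi^{0}=0$, then $\phi^{0}$ free with $\phi^{1}=0$), this forces the averaged final state to equal $(u^0_T,u^1_T)$, i.e.\ (\ref{eq.3}).

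The main obstacle is to make the integrations by parts legitimate at the natural regularity. The solution $u$ is only a transposition solution in $C([0,T];L^{2}(\Omega)\times H^{-1}(\Omega))$ and $f\in L^{2}((0,T)\times\Gamma_0)$, so the boundary integral involving $\frac{\partial\phi}{\partial\upsilon}$ has no meaning through ordinary trace theory. Its justification rests on the hidden-regularity (direct) estimate, which guarantees $\frac{\partial\phi}{\partial\upsilon}\in L^{2}((0,T)\times\Gamma)$ for $(\phi^{0},\phi^{1})\in H_0^{1}\times L^{2}$, combined with a density argument: prove the identity for smooth data and controls and pass to the limit using the continuous-dependence bound (\ref{eq.2}) together with the hidden-regularity estimate, both of which are uniform in $\sigma$ thanks to $0<a_m\le a(\sigma)\le a_M$. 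These uniform-in-$\sigma$ bounds are also what legitimize interchanging the $\sigma$-integral with the space-time integrals and with the duality pairing, through Bochner integrability of the $\sigma$-dependent integrands.
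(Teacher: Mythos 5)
Your proposal is correct and follows essentially the same route as the paper's proof: multiply the state equation by the adjoint solution, integrate by parts for smooth data, pass to general data by density (with the hidden-regularity estimate justifying the boundary term), and for the converse use that the resulting identity with the true averaged final state, compared against (\ref{eq.7}), forces (\ref{eq.3}) by nondegeneracy of the duality pairing. The only cosmetic difference is that the paper carries the integral in $\sigma$ inside the integration by parts from the start, whereas you derive the identity for each fixed $\sigma$ and average afterwards; your added detail on the uniform-in-$\sigma$ direct inequality makes explicit what the paper leaves implicit in its density argument.
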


\begin{proof}
Let us first suppose that $(u^{0},u^{1}),(\phi ^{0},\phi ^{1})\in \mathcal{D}%
(\Omega )\times \mathcal{D}(\Omega ),f\in $ $\mathcal{D}((0,T),\Gamma _{0})$
and let $u$ and $\phi $ be the smooth solutions of (\ref{eq.1}) and (\ref%
{eq.6}) respectively.

Multiplying the equation of $u$ by $\phi $ and integrating by parts one
obtains%
\begin{eqnarray} \nonumber
0 &=&\int_{0}^{T}\int_{\Omega }\int_{\Upsilon }\left( u_{tt}-a(\sigma
)\Delta u\right) \phi d\sigma dxdt=\left. \int_{\Omega }\int_{\Upsilon
}\left( \phi u_{t}-\phi _{t}u\right) d\sigma dx\right\vert
_{0}^{T}\\ \nonumber
&&+\int_{0}^{T}\int_{\Gamma }\int_{\Upsilon } a(\sigma)\left( -\frac{\partial u}{%
\partial \upsilon }\phi  +\frac{\partial \phi }{\partial \upsilon }u\right)
 d\sigma d\gamma dt \\ \nonumber
&=&\int_{0}^{T}\int_{\Gamma _{0}}\int_{\Upsilon }a(\sigma)\frac{\partial \phi }{%
\partial \upsilon }d\sigma f d\gamma dt+\int_{\Omega }\left(\phi
^{0} \int_{\Upsilon }u_t(T;\sigma)d\sigma -\phi ^{1}\int_{\Upsilon }u(T;\sigma)d\sigma \right) dx 
\\ \label{eq.vari_ca}
&&-\int_{\Omega }\left( u^{0} \int_{\Upsilon }\phi
\left( 0;\sigma \right) d\sigma -u^{1} \int_{\Upsilon}\phi _{t}(0;\sigma) d\sigma  \right)  dx
\end{eqnarray}
By a density argument we deduce that an analogous formula holds for any $(u^{0},u^{1})\in L^{2}\left(
\Omega \right) \times H^{-1}\left( \Omega \right) $, $(\phi^{0},\phi
^{1})\in H_{0}^{1}\times L^{2}(\Omega )$ and $f\in L^2(0,T;\Gamma_0)$. We only have to replace the integrals by duality products when corresponding.  Now, if $f$ is a control such that (\ref{eq.3}) holds then the weak form of (\ref{eq.vari_ca}) is equivalent to  
(\ref{eq.7}). Reciprocally, if (\ref{eq.7}) holds then the weak form of (\ref{eq.vari_ca}) is equivalent to 
$$
\left\langle (\phi^{0},\phi^{1}),\left( \int_{\Upsilon }u_t(T;\sigma)d\sigma, \int_{\Upsilon }u(T;\sigma)d\sigma \right) \right\rangle
=\left\langle (\phi^{0},\phi^{1}),\left( u^{T,0}, u^{T,1} \right) \right\rangle,
$$
for all $(\phi ^{0},\phi^{1})\in H_{0}^{1}(\Omega)\times L^{2}(\Omega ),$ and this is equivalent to (\ref{eq.3}).
\end{proof}

One possibility to construct controls $f$ that satisfy the variational condition (\ref{eq.7}) is as minimizers of a particular quadratic functional. We define the following cost functional $%
J:H_{0}^{1}\left( \Omega \right) \times L^{2}\left( \Omega \right)
\rightarrow 
\mathbb{R}
$ by:%
\begin{eqnarray}
J(\phi ^{0},\phi ^{1}) &=&\frac{1}{2}\int_{0}^{T}\eta (t)\int_{\Gamma
_{0}}\left\vert \int_{\Upsilon }a(\sigma)\frac{\partial \phi }{\partial \upsilon }%
d\sigma \right\vert ^{2}d\gamma dt-\left\langle \left( \int_{\Upsilon }\phi (\cdot
,0;\sigma )d\sigma ,\int_{\Upsilon }\phi _{t}(\cdot ,0;\sigma )d\sigma
\right) ,(u^{0},u^{1})\right\rangle  \notag   \\ \label{eq.8}
&&+\left\langle (\phi ^{0},\phi ^{1}),(u^0_T,u^1_T)\right\rangle ,
\end{eqnarray}%
where $(\phi ,\phi _{t})$ is the solution of (\ref{eq.6}) with the final
data $(\phi ^{0},\phi ^{1})\in H_{0}^{1}\left( \Omega \right) \times
L^{2}\left( \Omega \right) $. The function $\eta (t)$ is a prescribed smooth function in $[0,T]$ introduced to guarantee that the controls vanish in a neighborhood of $t=0,T$. Thus, we consider $\delta>0$ arbitrarily small and,
\begin{equation} \label{eq.eta}
\eta (t)=\left\{ 
\begin{array}{c}
1 \\ 
0%
\end{array}%
\begin{array}{l}
in\text{ }\left[ \delta ,T-\delta \right] \\ 
in\text{ }t=[0,\delta/2]\cup[T+\delta/2,T].%
\end{array}%
\right.
\end{equation} 
The function $\eta$ will depend on $\delta>0$ but we do not make explicit this dependence in the notation to simplify.

\begin{theorem} \label{th_1}
Let $(u^{0},u^{1}),(u^0_T,u^1_T)\in L^{2}\left( \Omega \right) \times H^{-1}\left( \Omega
\right) $ and suppose that $(\hat{\phi}^{0},\hat{\phi}^{1})\in
H_{0}^{1}\left( \Omega \right) \times L^{2}\left( \Omega \right) $ is a
minimizer of $J$ . If $\hat{\phi}$ is the corresponding solution of (\ref%
{eq.6}) with final data $(\hat{\phi}^{0},\hat{\phi}^{1})$ then 
\begin{equation}
f(x,t)=\eta (t)\int_{\Upsilon } a(\sigma)\frac{\partial \hat{\phi}}{\partial \upsilon }%
_{|\Gamma _{0}}d\sigma ,
\end{equation}%
is a control such that the solution of (\ref{eq.1}) satisfies (\ref{eq.3}).
\end{theorem}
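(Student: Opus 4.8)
The plan is to characterize the minimizer $(\hat\phi^0,\hat\phi^1)$ through the first-order (Euler--Lagrange) optimality condition for $J$ and to recognize this condition as exactly the variational identity (\ref{eq.7}) of Lemma \ref{le.1}, evaluated at the candidate control $f$. Since by the reciprocal part of Lemma \ref{le.1} that identity is equivalent to the averaged controllability condition (\ref{eq.3}), the theorem follows.

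First I would observe that $J$ is a quadratic functional on $H_0^1(\Omega)\times L^2(\Omega)$: because the solution map $(\phi^0,\phi^1)\mapsto\phi$ of the backward equation (\ref{eq.6}) is linear, the boundary term in (\ref{eq.8}) is a quadratic form in $(\phi^0,\phi^1)$, while the remaining two terms are linear. In particular $J$ is Gateaux-differentiable, and since $(\hat\phi^0,\hat\phi^1)$ minimizes it, it satisfies the necessary condition $\langle DJ(\hat\phi^0,\hat\phi^1),(\phi^0,\phi^1)\rangle=0$ for every test direction $(\phi^0,\phi^1)\in H_0^1(\Omega)\times L^2(\Omega)$.

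Next I would compute this derivative explicitly. Writing $\phi$ for the solution of (\ref{eq.6}) with data $(\phi^0,\phi^1)$ and using linearity, so that $\hat\phi+s\phi$ solves (\ref{eq.6}) with data $(\hat\phi^0+s\phi^0,\hat\phi^1+s\phi^1)$, differentiating at $s=0$ converts the quadratic boundary term into the bilinear expression
$$
\int_0^T\eta(t)\int_{\Gamma_0}\left(\int_\Upsilon a(\sigma)\frac{\partial\hat\phi}{\partial\upsilon}d\sigma\right)\left(\int_\Upsilon a(\sigma)\frac{\partial\phi}{\partial\upsilon}d\sigma\right)d\gamma dt,
$$
while the two linear terms reproduce themselves. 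Substituting the definition $f=\eta(t)\int_\Upsilon a(\sigma)\frac{\partial\hat\phi}{\partial\upsilon}d\sigma$ into the first factor, this bilinear term becomes exactly $\int_0^T\int_{\Gamma_0}\int_\Upsilon a(\sigma)\frac{\partial\phi}{\partial\upsilon}d\sigma\,f\,d\gamma dt$, so that the optimality condition $\langle DJ,(\phi^0,\phi^1)\rangle=0$ coincides term by term with (\ref{eq.7}). As this holds for all $(\phi^0,\phi^1)\in H_0^1\times L^2(\Omega)$, the reciprocal part of Lemma \ref{le.1} yields (\ref{eq.3}), i.e. $f$ is a control in average.

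Rather than a genuine obstacle, the work lies in the well-posedness issues that make the formal computation rigorous. I would first check that $f\in L^2((0,T)\times\Gamma_0)$, which rests on the hidden-regularity (direct) inequality for (\ref{eq.6}), namely $\frac{\partial\hat\phi}{\partial\upsilon}\in L^2((0,T)\times\Gamma)$ with a bound uniform in $\sigma$ (the dual counterpart of (\ref{eq.2})); the same bound makes $J$ finite and legitimizes both differentiation under the $\int_\Upsilon d\sigma$ sign and the passage from the pointwise-in-$\sigma$ boundary traces to their averages. The cutoff $\eta(t)$ plays no role in the matching beyond being absorbed into $f$; its only purpose, as noted before the statement, is to force $f$ to vanish near $t=0,T$, and it does not affect the identification of $\langle DJ,(\phi^0,\phi^1)\rangle=0$ with (\ref{eq.7}).
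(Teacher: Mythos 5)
Your proposal is correct and follows essentially the same route as the paper: compute the Gateaux derivative of $J$, use the minimality of $(\hat{\phi}^{0},\hat{\phi}^{1})$ to set it to zero, recognize the resulting Euler--Lagrange identity as (\ref{eq.7}) with $f=\eta(t)\int_{\Upsilon}a(\sigma)\frac{\partial\hat{\phi}}{\partial\upsilon}\,d\sigma$, and invoke the reciprocal part of Lemma \ref{le.1} to conclude (\ref{eq.3}). Your additional remarks on the hidden-regularity bound making $f\in L^{2}((0,T)\times\Gamma_{0})$ and $J$ finite are sound and correspond to estimates the paper records separately (inequality (\ref{eq:dir_in}) in the proof of Theorem \ref{th_2}), so they complement rather than alter the argument.
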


\begin{proof}
The Gateaux derivative ot $J$ at $(\hat{\phi}^{0},\hat{\phi}^{1})$ in the direction $({\phi}^{0},{\phi}^{1})$ is given by
\begin{eqnarray*}
&&\lim_{h\rightarrow 0}\frac{1}{h}J((\hat{\phi}^{0},\hat{\phi}^{1})+h(\phi
^{0},\phi ^{1}))-J(\hat{\phi}^{0},\hat{\phi}^{1}) \\
&=&\int_{0}^{T}\eta (t)\int_{\Gamma _{0}}\int_{\Upsilon} a(\sigma)
\frac{\partial \hat{\phi}}{\partial \upsilon }d\sigma \int_{\Upsilon}a(\sigma)\frac{\partial \phi }{\partial \upsilon }d\sigma
d\gamma dt-\left\langle \left( \int_{\Upsilon }\phi (\cdot ,0;\sigma )d\sigma
,\int_{\Upsilon }\phi _{t}(\cdot ,0;\sigma )d\sigma \right)
,(u^{0},u^{1})\right\rangle \\
&&+\left\langle (\phi ^{0},\phi ^{1}),(u^0_T,u^1_T)\right\rangle .
\end{eqnarray*}%
If $J$ achieves its minimum at $(\hat{\phi}^{0},\hat{\phi}^{1})$ we have 
\begin{eqnarray*}
0&=&
\int_{0}^{T}\eta (t)\int_{\Gamma _{0}}\int_{\Upsilon}a(\sigma)\frac{%
\partial \hat{\phi}}{\partial \upsilon }d\sigma \int_{\Upsilon}a(\sigma)
\frac{\partial \phi }{\partial \upsilon }d\sigma d\gamma dt-\left\langle
\left( \int_{\Upsilon }\phi (\cdot ,0;\sigma )d\sigma ,\int_{\Upsilon }\phi
_{t}(\cdot ,0;\sigma )d\sigma \right) ,(u^{0},u^{1})\right\rangle
\\
&&+\left\langle (\phi ^{0},\phi ^{1}),(u^0_T,u^1_T)\right\rangle ,
\end{eqnarray*}
for all $(\phi^0,\phi^1)\in H^1_0(\Omega)\times L^2(\Omega)$. 

From Lemma \ref{le.1} it follows that $f=\eta (t)\int_{\Upsilon}a(\sigma)\dfrac{\partial \hat{\phi}}{\partial \upsilon }_{|\Gamma
_{0}}d\sigma $ is a control for which (\ref{eq.3}) holds.
\end{proof}

\section{The observability inequality in average} \label{sec3}

Let us now give a general condition which ensures the existence of a minimizer for $J$ and therefore a control in average for system (\ref{eq.1}).

\begin{defn} \label{def1}
The system (\ref{eq.6}) is \textbf{observable in average} in time $T>0$ if for some $\varepsilon>0$  there
exists a constant $C_{1}>0$ such that 
\begin{equation} 
C_{1}\left\Vert (\phi ^{0},\phi ^{1})\right\Vert _{H_{0}^{1}\left( \Omega
\right) \times L^{2}\left( \Omega \right) }^{2}\leq \int_{\varepsilon}^{T-\varepsilon}\int_{\Gamma
_{0}}\left\vert \int_{\Upsilon }a(\sigma)\frac{\partial \phi }{\partial \upsilon }%
d\sigma \right\vert ^{2}d\gamma dt  \label{eq.10}
\end{equation}%
for any $(\phi ^{0},\phi ^{1})\in H_{0}^{1}\left( \Omega \right) \times
L^{2}\left( \Omega \right) $ where $\phi $ is the solution of (\ref{eq.6})
with final data $(\phi ^{0},\phi ^{1}).$
\end{defn}

In particular, when system (\ref{eq.6}) is \textbf{observable in average} in time $T>0$ we can choose $\eta (t)$ as in (\ref{eq.eta}), with $\delta>0$ sufficiently small, in such a way that
\begin{equation} \label{eq.10eta}
C_{1}\left\Vert (\phi ^{0},\phi ^{1})\right\Vert _{H_{0}^{1}\left( \Omega
\right) \times L^{2}\left( \Omega \right) }^{2}\leq \int_{0}^{T
}\int_{\Gamma _{0}}\eta (t)\left\vert \int_{\Upsilon }a(\sigma)\frac{\partial \phi }{%
\partial \upsilon }d\sigma \right\vert ^{2}d\gamma dt .
\end{equation}
It is enough to consider $\delta = \varepsilon$ where $\varepsilon$ is the constant in Definition \ref{def1}, since in this way $\eta(t)=1 $ for $t\in [\varepsilon, T-\varepsilon]$ and formula (\ref{eq.10eta}) is a direct consequence of (\ref{eq.10}).

From now on, when a system is observable in average we will assume that $\eta$ is chosen in such a way that (\ref{eq.10eta}) holds. 

\begin{rem}
The parameter $\varepsilon>0$ in the previous definition is necessary to deduce (\ref{eq.10eta}) from (\ref{eq.10}). If we consider $\varepsilon=0$, we obtain a more natural version of the observability inequality but now it is not clear how to deduce  (\ref{eq.10eta}). This is in contrast with a single wave equation (i.e. without parameter dependence) where (\ref{eq.10eta}) and (\ref{eq.10eta}) are equivalent, for sufficiently large time $T$. 
\end{rem}

We prove below that the observability in average that we have defined above is a sufficient condition for the controllability in average of system (\ref{eq.1}). But we first state a technical lemma
that will be used later.

\begin{lem}
\label{le.ene} There exists a constant $C>0$, independent of $\sigma \in
\Upsilon $, such that 
\begin{equation} \label{eq.10b}
\left\Vert \left( \int_{\Upsilon }\phi (\cdot ,0;\sigma )d\sigma
,\int_{\Upsilon }\phi _{t}(\cdot ,0;\sigma )d\sigma \right) \right\Vert
_{H_{0}^{1}(\Omega)\times L^{2}(\Omega)}\leq C\Vert (\phi ^{0},\phi ^{1})\Vert
_{H_{0}^{1}(\Omega)\times L^{2}(\Omega)},
\end{equation}%
for all solutions $\phi $ of the adjoint system (\ref{eq.6}) with final data 
$(\phi ^{0},\phi ^{1})\in H_{0}^{1}(\Omega)\times L^{2}(\Omega )$.
\end{lem}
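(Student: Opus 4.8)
The plan is to prove the uniform bound $(\ref{eq.10b})$ by relating the solution of the backwards wave equation $(\ref{eq.6})$ at the initial time $t=0$ to its final data at $t=T$, using only the standard energy estimate for the wave equation together with the uniformity of the constant in $\sigma$ provided by the assumption $0<a_m\le a(\sigma)\le a_M<\infty$. The key observation is that the average operator commutes with the norm only through a triangle-type inequality, so the main work is to control $\left\Vert\left(\int_\Upsilon\phi(\cdot,0;\sigma)\,d\sigma,\int_\Upsilon\phi_t(\cdot,0;\sigma)\,d\sigma\right)\right\Vert$ by the $\sigma$-uniform energy of each individual solution $\phi(\cdot,\cdot;\sigma)$.

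First I would fix $\sigma\in\Upsilon$ and consider the energy of the solution $\phi(\cdot,t;\sigma)$ of $(\ref{eq.6})$,
\begin{equation*}
E(t;\sigma)=\frac{1}{2}\int_\Omega\left(|\phi_t(x,t;\sigma)|^2+a(\sigma)|\nabla\phi(x,t;\sigma)|^2\right)dx.
\end{equation*}
Since the boundary condition is homogeneous ($\phi=0$ on $\Sigma$) and the equation has no source term, this energy is conserved in time, so $E(0;\sigma)=E(T;\sigma)$. Because $a_m\le a(\sigma)\le a_M$, the energy $E(t;\sigma)$ is equivalent, uniformly in $\sigma$, to the squared $H_0^1(\Omega)\times L^2(\Omega)$ norm of $(\phi(\cdot,t;\sigma),\phi_t(\cdot,t;\sigma))$. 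Combining conservation with this equivalence yields
\begin{equation*}
\left\Vert(\phi(\cdot,0;\sigma),\phi_t(\cdot,0;\sigma))\right\Vert_{H_0^1\times L^2}\le C_0\left\Vert(\phi^0,\phi^1)\right\Vert_{H_0^1\times L^2},
\end{equation*}
with $C_0$ depending only on $a_m,a_M$ (and the Poincaré constant of $\Omega$), but not on $\sigma$.

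Next I would pass to the average. Writing the average as an integral over the bounded interval $\Upsilon$, the triangle inequality (Minkowski's integral inequality in the Hilbert space $H_0^1\times L^2$) gives
\begin{equation*}
\left\Vert\left(\int_\Upsilon\phi(\cdot,0;\sigma)\,d\sigma,\int_\Upsilon\phi_t(\cdot,0;\sigma)\,d\sigma\right)\right\Vert_{H_0^1\times L^2}\le\int_\Upsilon\left\Vert(\phi(\cdot,0;\sigma),\phi_t(\cdot,0;\sigma))\right\Vert_{H_0^1\times L^2}d\sigma.
\end{equation*}
Inserting the uniform energy bound from the previous step and integrating over the finite-length interval $\Upsilon$ produces the desired estimate $(\ref{eq.10b})$ with $C=C_0\,|\Upsilon|$, which is independent of $\sigma$ as required.

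The only genuinely delicate point is ensuring the constant in the energy-norm equivalence is truly uniform in $\sigma$; this is immediate here because $a(\sigma)$ is squeezed between the fixed constants $a_m$ and $a_M$, so the kinetic term is untouched and the potential term is controlled from above and below by $\sigma$-free multiples of $\int_\Omega|\nabla\phi|^2$. I expect no further obstacle: the statement is essentially a bookkeeping consequence of energy conservation plus Minkowski's inequality, and the uniformity assumption on $a(\sigma)$ does all the heavy lifting. One should simply be careful that the regularity of the data $(\phi^0,\phi^1)\in H_0^1(\Omega)\times L^2(\Omega)$ is exactly the class in which the wave energy is well defined and conserved, which is the case here.
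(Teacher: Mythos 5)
Your proof is correct and follows essentially the same route as the paper's: a $\sigma$-uniform bound at $t=0$ via conservation of the energy $\int_\Omega\left(|\phi_t|^2+a(\sigma)|\nabla\phi|^2\right)dx$ together with the equivalence of this energy with the $H_0^1\times L^2$ norm (uniform thanks to $a_m\le a(\sigma)\le a_M$), combined with Minkowski's integral inequality to handle the average. The paper merely performs the two steps in the opposite order (Minkowski first, then the energy identity inside the integral) and obtains the explicit constant $C=\left(\tfrac{1}{a_m}+1\right)^{1/2}\left(a_M+1\right)^{1/2}$, which matches your $C_0\,|\Upsilon|$ up to the normalization of the measure on $\Upsilon$.
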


\begin{proof}
Observe that,
\begin{eqnarray*}
&& \left\Vert \left( \int_{\Upsilon }\phi (\cdot ,0;\sigma )d\sigma
,\int_{\Upsilon }\phi _{t}(\cdot ,0;\sigma )d\sigma \right) \right\Vert
_{H_{0}^{1}\times L^{2}} \leq \int_{\Upsilon }\left\Vert \left( \phi
(\cdot ,0;\sigma ),\phi _{t}(\cdot ,0;\sigma )\right) \right\Vert
_{H_{0}^{1}\times L^{2}}d\sigma \\
&& \quad \leq \int_{\Upsilon }\left( \frac{a(\sigma )}{a_{m}}\left\Vert \phi (\cdot
,0;\sigma )\right\Vert _{H_{0}^{1}}^{2}+\left\Vert \phi _{t}(\cdot ,0;\sigma
)\right\Vert _{L^{2}}^{2}\right) ^{\frac{1}{2}}d\sigma \\
&& \quad \leq \left( \frac{1}{a_{m}}+1\right) ^{\frac{1}{2}}\int_{\Upsilon }\left(
a(\sigma )\left\Vert \phi (\cdot ,0;\sigma )\right\Vert
_{H_{0}^{1}}^{2}+\left\Vert \phi _{t}(\cdot ,0;\sigma )\right\Vert
_{L^{2}}^{2}\right) ^{\frac{1}{2}}d\sigma \\
&&\quad =\left( \frac{1}{a_{m}}+1\right) ^{\frac{1}{2}}\int_{\Upsilon }\left(
a(\sigma )\left\Vert \phi ^{0}\right\Vert _{H_{0}^{1}}^{2}+\left\Vert \phi
^{1}\right\Vert _{L^{2}}^{2}\right) ^{\frac{1}{2}}d\sigma 
\leq \left( \frac{1}{a_{m}}+1\right) ^{\frac{1}{2}}\left( a_{M}+1\right) ^{%
\frac{1}{2}}\left( \left\Vert \phi ^{0}\right\Vert
_{H_{0}^{1}}^{2}+\left\Vert \phi ^{1}\right\Vert _{L^{2}}^{2}\right) ^{\frac{%
1}{2}} .
\end{eqnarray*}%
Then, inequality (\ref{eq.10b}) holds with $C=\left( \dfrac{1}{a_{m}}+1\right) ^{\frac{1}{2}}\left(
a_{M}+1\right) ^{\frac{1}{2}}$ and $a_{m},a_{M}$ are the minimum and maximum
value of $a(\sigma )$, respectively. 
\end{proof}

\begin{theorem} \label{th_2}
If the system (\ref{eq.6}) is \textbf{observable in average} then the functional $%
J $ defined by (\ref{eq.8}) has an unique minimizer $(\hat{\phi}^{0},\hat{%
\phi}^{1})\in H_{0}^{1}\left( \Omega \right) \times L^{2}\left( \Omega
\right) $
\end{theorem}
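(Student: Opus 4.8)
The plan is to apply the direct method of the calculus of variations, exploiting the fact that $J$ is a quadratic functional of the form $J(\phi^0,\phi^1)=\tfrac12 Q(\phi^0,\phi^1)-\ell(\phi^0,\phi^1)$ on the Hilbert space $H=H_0^1(\Omega)\times L^2(\Omega)$, where
\[
Q(\phi^0,\phi^1)=\int_0^T\eta(t)\int_{\Gamma_0}\left|\int_\Upsilon a(\sigma)\frac{\partial\phi}{\partial\upsilon}\,d\sigma\right|^2 d\gamma\,dt
\]
is the nonnegative quadratic observation term and
\[
\ell(\phi^0,\phi^1)=\left\langle\left(\int_\Upsilon\phi(\cdot,0;\sigma)\,d\sigma,\int_\Upsilon\phi_t(\cdot,0;\sigma)\,d\sigma\right),(u^0,u^1)\right\rangle-\langle(\phi^0,\phi^1),(u^0_T,u^1_T)\rangle
\]
is linear in $(\phi^0,\phi^1)$. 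To conclude I would verify that $J$ is continuous, coercive and strictly convex on $H$; existence and uniqueness of the minimizer then follow from the standard theorem for such functionals (equivalently from Lax--Milgram applied to the associated continuous, coercive, symmetric bilinear form).

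First I would establish the continuity of $J$. The linear part $\ell$ is bounded: its second term is controlled by $\|(\phi^0,\phi^1)\|_H\,\|(u^0_T,u^1_T)\|_{L^2\times H^{-1}}$ by Cauchy--Schwarz in the duality pairing, while its first term is handled using Lemma~\ref{le.ene}, which gives $\|(\int_\Upsilon\phi(\cdot,0;\sigma)d\sigma,\int_\Upsilon\phi_t(\cdot,0;\sigma)d\sigma)\|_H\le C\|(\phi^0,\phi^1)\|_H$, again combined with Cauchy--Schwarz. For the quadratic term I would invoke the direct (hidden regularity) trace inequality for the wave equation, which guarantees $\partial\phi/\partial\upsilon\in L^2((0,T)\times\Gamma_0)$ with a bound by the energy of the data that is uniform in $\sigma\in\Upsilon$ thanks to $0<a_m\le a(\sigma)\le a_M<\infty$; applying Minkowski's integral inequality to bring the norm inside $\int_\Upsilon d\sigma$ then yields $Q(\phi^0,\phi^1)\le C\|(\phi^0,\phi^1)\|_H^2$.

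Next I would establish coercivity and strict convexity, which is where the observability hypothesis enters. By assumption the system is observable in average, so after choosing $\eta$ as in (\ref{eq.eta}) the inequality (\ref{eq.10eta}) holds, i.e. $Q(\phi^0,\phi^1)\ge C_1\|(\phi^0,\phi^1)\|_H^2$. Combining this lower bound on the quadratic term with the continuity of $\ell$ gives
\[
J(\phi^0,\phi^1)\ge \frac{C_1}{2}\|(\phi^0,\phi^1)\|_H^2 - C\|(\phi^0,\phi^1)\|_H,
\]
so $J(\phi^0,\phi^1)\to+\infty$ as $\|(\phi^0,\phi^1)\|_H\to\infty$, which is coercivity. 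The same lower bound shows that $Q$ is positive definite, hence the quadratic part of $J$ is strictly convex and, since $\ell$ is affine, $J$ itself is strictly convex. A continuous, coercive, strictly convex functional on a Hilbert space attains its infimum at a unique point, which is the claimed minimizer.

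The hard part will be the continuity of the quadratic observation term, specifically the uniformity in $\sigma$ of the direct trace estimate. One must check that the hidden-regularity constant for the normal derivative of solutions of (\ref{eq.6}) can be taken independent of $\sigma$; this is where the uniform ellipticity bounds $a_m\le a(\sigma)\le a_M$ are essential, since they allow a single estimate to serve for the whole family before integrating in $\sigma$. By contrast, coercivity and strict convexity are immediate once (\ref{eq.10eta}) is granted, so the observability inequality does all the substantive work on that side.
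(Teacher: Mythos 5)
Your proposal is correct and follows essentially the same route as the paper's proof: continuity of the linear part via Lemma \ref{le.ene}, continuity of the quadratic observation term via the direct (hidden-regularity) trace inequality for the wave equation combined with Minkowski's integral inequality and the bounds on $a(\sigma)$ (the paper makes this quantitative by exhibiting the constant $C(\sigma,T)=C(1+T\sigma)/\sigma$ and noting its integrability over $\Upsilon$, which is the same point as your uniform-in-$\sigma$ bound), and coercivity from the averaged observability inequality (\ref{eq.10eta}) together with Lemma \ref{le.ene}. The one genuine addition on your side is the explicit uniqueness argument via strict convexity of the positive-definite quadratic part; the paper's proof only asserts convexity and establishes coercivity for existence, leaving uniqueness implicit, so your treatment is slightly more complete on that point.
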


\begin{proof}
It is easy to see that $J$ is continuous and convex. The continuity of the second term in (\ref{eq.8}) is a consequence of Lemma \ref{le.ene} while for the first term one just observe that  
\begin{eqnarray} \notag
&& \left\| \eta (t) \int_{\Upsilon }a(\sigma)\frac{\partial \phi }{
\partial \upsilon }d\sigma \right\|_{L^2((0,T)\times \Gamma_0)} 
\leq \int_{\Upsilon } \left\| \eta (t) a(\sigma)\frac{\partial \phi }{
\partial \upsilon } \right\|_{L^2((0,T)\times \Gamma_0)} d\sigma 
\\ \label{eq:dir_in}
&& \quad \leq \int_{\Upsilon } a(\sigma) C(\sigma,T) \left\| (\phi^0,\phi^1) \right\|_{H^1_0(\Omega)\times L^2(\Omega)} d\sigma \leq C' \left\| (\phi^0,\phi^1) \right\|_{H^1_0(\Omega)\times L^2(\Omega)},
\end{eqnarray}
where $C(\sigma,T)$ is the constant in the classical direct (regularity) inequality for the constant coefficients wave equation (\ref{eq.1}). The dependence on $\sigma $ of this constant is easily obtained from its proof (see \cite{Li}). In particular, $C(\sigma,T)=C(1+T\sigma)/\sigma$ that is integrable in $\Upsilon$, and the continuity of this first term in $J$ holds.

The existence of a
minimum is ensured if we prove that $J$ is also coercive i.e.%
\begin{equation} \label{eq.coerc}
\lim_{\left\Vert (\phi ^{0},\phi ^{1})\right\Vert _{H_{0}^{1}\left( \Omega
\right) \times L^{2}\left( \Omega \right) }\rightarrow \infty }J(\phi
^{0},\phi ^{1})=\infty
\end{equation}

The coercivity of the functional $J$ follows immediately from (\ref{eq.10b}) and (\ref{eq.10eta}).
Indeed,%
\begin{eqnarray*}
J(\phi ^{0},\phi ^{1}) &\geq &\frac{1}{2}\int_{0}^{T}\eta (t)\int_{\Gamma
_{0}}\left\vert \int_{\Upsilon }a(\sigma)\frac{\partial \phi }{\partial \upsilon }%
d\sigma \right\vert ^{2}d\gamma dt-\left\Vert (\phi ^{0},\phi ^{1})\right\Vert
_{H_{0}^{1}\left( \Omega \right) \times L^{2}\left( \Omega \right)
}\left\Vert (u^0_T,u^1_T)\right\Vert _{L^{2}\left( \Omega \right) \times
H^{-1}\left( \Omega \right) } \\
&&-\left\Vert \left( \int_{\Upsilon }\phi (\cdot ,0;\sigma )d\sigma
,\int_{\Upsilon }\phi _{t}(\cdot ,0;\sigma )d\sigma \right) \right\Vert
_{H_{0}^{1}\left( \Omega \right) \times L^{2}\left( \Omega \right)
}\left\Vert (u^{0},u^{1})\right\Vert _{L^{2}\left( \Omega \right) \times
H^{-1}\left( \Omega \right) } \\
&\geq &C_{1}\left\Vert (\phi ^{0},\phi ^{1})\right\Vert _{H_{0}^{1}\left(
\Omega \right) \times L^{2}\left( \Omega \right) }^{2}-\left\Vert (\phi
^{0},\phi ^{1})\right\Vert _{H_{0}^{1}\left( \Omega \right) \times
L^{2}\left( \Omega \right) }\left\Vert (u^0_T,u^1_T)\right\Vert
_{L^{2}\left( \Omega \right) \times H^{-1}\left( \Omega \right) } \\
&&-C\Vert (\phi ^{0},\phi ^{1})\Vert _{H_{0}^{1}\times L^{2}}\left\Vert
(u^{0},u^{1})\right\Vert _{L^{2}\left( \Omega \right) \times H^{-1}\left(
\Omega \right) } .
\end{eqnarray*}
Condition (\ref{eq.coerc}) follows inmediatly from this inequality.
\end{proof}

We now give two properties of the control obtained by minimization of the functional $J$.

\begin{proposition}
\label{le.pp1} For some initial data $(u^0,u^1)\in L^2(\Omega) \times H^{-1}(\Omega)$, let $f(t)=\eta (t)\int_{\Upsilon }a(\sigma)\dfrac{\partial \hat \phi }{\partial
\upsilon }_{|\Gamma _{0}}d\sigma $ be the averaged control given by
the minimizer $(\hat \phi^0,\hat \phi^1)$ of the functional $J$. If $g$ $\in L^{2}\left( (0,T)\times \Gamma
_{0}\right) $ is any other control driving to zero the average of the state
of system (\ref{eq.1}) then
\begin{equation} \label{eq.11}
\int_0^T  \int_{\Gamma_0}|f|^2 ds\; \frac{dt}{\eta(t)} \leq
\int_0^T \int_{\Gamma_0}|g|^2 ds\; \frac{dt}{\eta(t)}
\end{equation}
\end{proposition}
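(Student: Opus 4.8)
The plan is to use the variational identity (\ref{eq.7}) of Lemma \ref{le.1}, which \emph{every} control reaching the prescribed average must satisfy, together with the particular structure $f=\eta(t)\int_\Upsilon a(\sigma)\frac{\partial\hat\phi}{\partial\upsilon}\,d\sigma$ of the control built from the minimizer of $J$. Here the target is zero, so (\ref{eq.7}) reads, for every $(\phi^0,\phi^1)\in H_0^1(\Omega)\times L^2(\Omega)$,
\[
\int_0^T\!\!\int_{\Gamma_0}\int_\Upsilon a(\sigma)\frac{\partial\phi}{\partial\upsilon}\,d\sigma\, f\,d\gamma dt=\left\langle\left(\int_\Upsilon\phi(\cdot,0;\sigma)\,d\sigma,\int_\Upsilon\phi_t(\cdot,0;\sigma)\,d\sigma\right),(u^0,u^1)\right\rangle.
\]
Since $g$ drives the same average to zero from the same initial data, Lemma \ref{le.1} guarantees that $g$ satisfies the identical identity. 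Subtracting the two identities cancels the right-hand side and leaves the orthogonality relation
\[
\int_0^T\!\!\int_{\Gamma_0}\int_\Upsilon a(\sigma)\frac{\partial\phi}{\partial\upsilon}\,d\sigma\,(f-g)\,d\gamma dt=0,\qquad\text{for all }(\phi^0,\phi^1)\in H_0^1(\Omega)\times L^2(\Omega).
\]

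The decisive step is to test this relation with the minimizer itself, $(\phi^0,\phi^1)=(\hat\phi^0,\hat\phi^1)$, so that on $\Gamma_0$ one has $\int_\Upsilon a(\sigma)\frac{\partial\hat\phi}{\partial\upsilon}\,d\sigma=f/\eta(t)$. Substituting this into the orthogonality relation yields
\[
\int_0^T\!\!\int_{\Gamma_0}\frac{f}{\eta(t)}(f-g)\,d\gamma dt=0,\qquad\text{that is}\qquad \int_0^T\!\!\int_{\Gamma_0}\frac{|f|^2}{\eta(t)}\,d\gamma dt=\int_0^T\!\!\int_{\Gamma_0}\frac{fg}{\eta(t)}\,d\gamma dt.
\]
I would remark that these integrals are meaningful despite $\eta$ vanishing near $t=0,T$: each factor $f/\eta$ equals the bounded quantity $\int_\Upsilon a(\sigma)\frac{\partial\hat\phi}{\partial\upsilon}\,d\sigma$, which belongs to $L^2((0,T)\times\Gamma_0)$ by the direct inequality already used in (\ref{eq:dir_in}); equivalently $|f|^2/\eta=\eta\,|\int_\Upsilon a(\sigma)\frac{\partial\hat\phi}{\partial\upsilon}\,d\sigma|^2$ is integrable since $\eta\le 1$.

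To conclude, I would apply the Cauchy--Schwarz inequality to the right-hand side in the weighted space $L^2((0,T)\times\Gamma_0;\,dt/\eta)$, writing the integrand as $(f/\sqrt{\eta})(g/\sqrt{\eta})$:
\[
\int_0^T\!\!\int_{\Gamma_0}\frac{|f|^2}{\eta}\,d\gamma dt=\int_0^T\!\!\int_{\Gamma_0}\frac{fg}{\eta}\,d\gamma dt\le\left(\int_0^T\!\!\int_{\Gamma_0}\frac{|f|^2}{\eta}\,d\gamma dt\right)^{1/2}\left(\int_0^T\!\!\int_{\Gamma_0}\frac{|g|^2}{\eta}\,d\gamma dt\right)^{1/2}.
\]
If the weighted norm of $g$ is infinite, (\ref{eq.11}) holds trivially; otherwise, dividing by $(\int|f|^2/\eta)^{1/2}$ (the case $f\equiv 0$ being immediate) and squaring gives exactly (\ref{eq.11}). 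The only delicate point is the singular weight $1/\eta$ at the endpoints, and this is handled precisely by the observation above that $f$ carries a compensating factor $\eta$, so that every integral involving $f$ is finite and the asserted inequality is vacuous whenever $g$ has infinite weighted norm.
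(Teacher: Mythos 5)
Your proof is correct and follows essentially the same route as the paper: apply the variational identity (\ref{eq.7}) to both controls with the minimizer $(\hat\phi^0,\hat\phi^1)$ as test function, deduce $\int_0^T\int_{\Gamma_0}|f|^2\,d\gamma\,\frac{dt}{\eta(t)}=\int_0^T\int_{\Gamma_0}fg\,d\gamma\,\frac{dt}{\eta(t)}$, and conclude by Cauchy--Schwarz in the weighted space followed by division. Your extra remarks on the integrability of $|f|^2/\eta$ near $t=0,T$ and on the trivial cases ($f\equiv 0$, or $g$ with infinite weighted norm) are details the paper passes over silently, but they do not alter the argument.
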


\begin{rem}
This result establishes that the control given from the minimizer of $J$ is the one that minimizes the $L^2-$weighted norm associated to the measure $\frac1{\eta(t)} dt$. 
\end{rem}

\begin{proof}
Let $(\hat{\phi}^{0},\hat{\phi}^{1})$ be the minimizer for the functional $J$ .
Consider now relation (\ref{eq.7}) for the control $\eta (t)\int_{\Upsilon }a(\sigma)
\frac{\partial \hat \phi }{\partial \upsilon }_{|\Gamma _{0}}d\sigma $ . By
taking $(\hat{\phi}^{0},\hat{\phi}^{1})$ as test function we obtain that%
\begin{eqnarray*}
\int_0^T  \int_{\Gamma_0}|f|^2 ds\; \frac{dt}{\eta(t)}
&=&\int_{0}^{T}\int_{\Gamma _{0}}\eta (t)\left\vert \int_{\Upsilon }a(\sigma)\frac{%
\partial \hat{\phi}}{\partial \upsilon }d\sigma \right\vert ^{2}d\gamma dt \\
&=&\left\langle \left( \int_{\Upsilon }\hat{\phi}(\cdot ,0;\sigma
)d\sigma ,\int_{\Upsilon }\hat{\phi}_{t}(\cdot ,0;\sigma )d\sigma \right)
,(u^{0},u^{1})\right\rangle -\left\langle (\hat{\phi}^{0},\hat{\phi}%
^{1}),(u^0_T,u^1_T)\right\rangle
\end{eqnarray*}

On the other hand, relation (\ref{eq.7}) for the control $g$ and test
function $(\hat{\phi}^{0},\hat{\phi}^{1})$ gives%
\begin{equation*}
\int_{0}^{T}\int_{\Gamma _{0}}\int_{\Upsilon }a(\sigma)\frac{\partial \hat \phi }{\partial
\upsilon }d\sigma g(x,t)\; d\gamma\; dt=\left\langle \left( \int_{\Upsilon }\hat{\phi}(\cdot
,0;\sigma )d\sigma ,\int_{\Upsilon }\hat{\phi}_{t}(\cdot ,0;\sigma )d\sigma
\right) ,(u^{0},u^{1})\right\rangle -\left\langle (\hat{\phi}^{0},\hat{\phi}%
^{1}),(u^0_T,u^1_T)\right\rangle
\end{equation*}

Combining the last two identities we obtain that%
\begin{eqnarray*}
\int_0^T  \int_{\Gamma_0}|f|^2 ds\; \frac{dt}{\eta(t)}
&=&\int_{0}^{T}\int_{\Gamma _{0}}\int_{\Upsilon }\eta(t)a(\sigma)\frac{\partial \hat \phi 
}{\partial \upsilon }d\sigma g\; ds\;\frac{dt}{\eta(t)} \\
&\leq &\left(\int_0^T  \int_{\Gamma_0}\left| \int_\Upsilon \eta(t) a(\sigma)\frac{\partial \hat \phi 
}{\partial \upsilon } d\sigma \right|^2 ds\; \frac{dt}{\eta(t)} \right)^{1/2}\left(
\int_0^T \int_{\Gamma_0}|g|^2 ds\; \frac{dt}{\eta(t)} \right)^{1/2}
.
\end{eqnarray*}%
Dividing this inequality by $\left(\int_0^T  \int_{\Gamma_0}|f|^2 ds\; \frac{dt}{\eta(t)}\right)^{1/2}$ we easily obtain (\ref{eq.11}).
\end{proof}

The next result provides a bound of the control from the initial and target data.

\begin{proposition} \label{prop.4}
Let $f(t)=\eta (t)\int_{\Upsilon }a(\sigma)\dfrac{\partial \hat{\phi}}{\partial
\upsilon }_{|\Gamma _{0}}d\sigma $ be the averaged control, associated to the
initial data $(u^{0},u^{1})$ and target $(u^{0}_T,u^{1}_T)$, obtained by minimization of the functional $J$. 
Then there exists a constant, independent of $%
\sigma \in \Upsilon $, such that 
\begin{equation}
\int_{0}^{T}\frac{1}{\eta (t)}\left\vert f\left( t\right) \right\vert
^{2}dt\leq C\left( \left\Vert (u^{0},u^{1})\right\Vert _{L^{2}\left( \Omega
\right) \times H^{-1}\left( \Omega \right) }^{2}+\left\Vert
(u^0_T,u^1_T)\right\Vert _{L^{2}\left( \Omega \right) \times
H^{-1}\left( \Omega \right) }^{2}\right)  \label{eq.prop4}
\end{equation}
\end{proposition}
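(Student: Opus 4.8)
The plan is to obtain the bound on the weighted $L^2$-norm of the control by combining the coercivity of $J$ with the fact that the minimizer $(\hat\phi^0,\hat\phi^1)$ is a critical point. Two facts from the previous results are the essential ingredients: first, the observability inequality in the form (\ref{eq.10eta}), which shows that the weighted norm of the control equals the boundary quadratic term $\int_0^T\eta(t)\int_{\Gamma_0}|\int_\Upsilon a(\sigma)\frac{\partial\hat\phi}{\partial\upsilon}\,d\sigma|^2\,d\gamma\,dt$ and is bounded below by $C_1\|(\hat\phi^0,\hat\phi^1)\|^2_{H^1_0\times L^2}$; second, the energy bound from Lemma \ref{le.ene} controlling the averaged initial data of $\hat\phi$ by $\|(\hat\phi^0,\hat\phi^1)\|_{H^1_0\times L^2}$.

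First I would use the Euler--Lagrange identity from the proof of Theorem \ref{th_1}, tested against the minimizer $(\hat\phi^0,\hat\phi^1)$ itself. This gives exactly
\begin{equation*}
\int_0^T\eta(t)\int_{\Gamma_0}\Bigl|\int_\Upsilon a(\sigma)\frac{\partial\hat\phi}{\partial\upsilon}\,d\sigma\Bigr|^2\,d\gamma\,dt
=\Bigl\langle\Bigl(\int_\Upsilon\hat\phi(\cdot,0;\sigma)\,d\sigma,\int_\Upsilon\hat\phi_t(\cdot,0;\sigma)\,d\sigma\Bigr),(u^0,u^1)\Bigr\rangle-\langle(\hat\phi^0,\hat\phi^1),(u^0_T,u^1_T)\rangle,
\end{equation*}
which is the same identity already derived in the proof of Proposition \ref{le.pp1}. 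The left-hand side is precisely $\int_0^T\frac{1}{\eta(t)}|f(t)|^2\,dt$ by the definition of $f$. Next I would estimate the right-hand side from above: applying Cauchy--Schwarz to each duality pairing and then Lemma \ref{le.ene} to the averaged-initial-data factor, one bounds it by
\begin{equation*}
\bigl(C\|(u^0,u^1)\|_{L^2\times H^{-1}}+\|(u^0_T,u^1_T)\|_{L^2\times H^{-1}}\bigr)\,\|(\hat\phi^0,\hat\phi^1)\|_{H^1_0\times L^2}.
\end{equation*}

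Then I would invoke the observability inequality (\ref{eq.10eta}) to replace the left-hand quadratic term from below by $C_1\|(\hat\phi^0,\hat\phi^1)\|^2_{H^1_0\times L^2}$. Combining the two gives
\begin{equation*}
C_1\|(\hat\phi^0,\hat\phi^1)\|^2_{H^1_0\times L^2}\leq\bigl(C\|(u^0,u^1)\|_{L^2\times H^{-1}}+\|(u^0_T,u^1_T)\|_{L^2\times H^{-1}}\bigr)\,\|(\hat\phi^0,\hat\phi^1)\|_{H^1_0\times L^2},
\end{equation*}
and after dividing by $\|(\hat\phi^0,\hat\phi^1)\|_{H^1_0\times L^2}$ one controls the norm of the minimizer linearly by the data. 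Substituting this bound back into the identity for $\int_0^T\frac{1}{\eta(t)}|f(t)|^2\,dt$ and using Young's inequality to turn the cross terms into a sum of squares yields (\ref{eq.prop4}). The argument is essentially routine once the identity and the two inequalities are in place; the only point requiring mild care is bookkeeping the $\sigma$-independence of all constants, which is guaranteed because both $C_1$ from observability and $C$ from Lemma \ref{le.ene} are stated to be independent of $\sigma$, so no obstacle is expected here.
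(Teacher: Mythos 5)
Your proposal is correct and takes essentially the same route as the paper: the boundary quadratic term is bounded by the two duality pairings using the minimizer property, those are estimated via Cauchy--Schwarz and Lemma \ref{le.ene}, and the observability inequality (\ref{eq.10eta}) closes the argument with $\sigma$-independent constants. The only (immaterial) difference is the starting point --- you use the Euler--Lagrange identity tested at the minimizer (an equality, as in the proof of Proposition \ref{le.pp1}), while the paper uses $J(\hat\phi^{0},\hat\phi^{1})\leq J(0,0)$, which yields the same inequality up to a factor $\tfrac12$ --- and the final algebra (solving for $\|(\hat\phi^{0},\hat\phi^{1})\|_{H^{1}_{0}\times L^{2}}$ and substituting back, versus dividing by the square root of the quadratic term) is equivalent.
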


\begin{proof}
As $\hat{\phi}$ is the solution of the adjoint system (\ref{eq.6})
associated to the the minimizer $(\hat{\phi}^{0},\hat{\phi}^{1})$ of $J$ in $%
H_{0}^{1}\left( \Omega \right) \times L^{2}\left( \Omega \right) $, we have
in particular 
\begin{equation}
J(\hat{\phi}^{0},\hat{\phi}^{1})\leq J(0,0).
\end{equation}%
Then, taking into account Lemma \ref{le.ene}, we have 
\begin{eqnarray*}
&&\frac{1}{2}\int_{0}^{T}\int_{\Gamma _{0}}\eta (t)\left\vert \int_{\Upsilon
}a(\sigma )\frac{\partial \hat{\phi}}{\partial \upsilon }d\sigma \right\vert
^{2}d\gamma dt\\
& \leq& \left\langle (\hat{\phi}^{0},\hat{\phi}^{1}),(u^0_T,u^1_T)%
\right\rangle -\left\langle \left( \int_{\Upsilon }\phi (\cdot ,0;\sigma
)d\sigma ,\int_{\Upsilon }\phi _{t}(\cdot ,0;\sigma )d\sigma \right)
,(u^{0},u^{1})\right\rangle \\
&\leq &\left\Vert (\hat{\phi}^{0},\hat{\phi}^{1})\right\Vert
_{H_{0}^{1}\left( \Omega \right) \times L^{2}\left( \Omega \right)
}\left\Vert (u^0_T,u^1_T)\right\Vert _{L^{2}\left( \Omega \right) \times
H^{-1}\left( \Omega \right) } \\
&&+\left\Vert \left( \int_{\Upsilon }\phi (\cdot ,0;\sigma )d\sigma
,\int_{\Upsilon }\phi _{t}(\cdot ,0;\sigma )d\sigma \right) \right\Vert
_{H_{0}^{1}\left( \Omega \right) \times L^{2}\left( \Omega \right)
}\left\Vert (u^{0},u^{1})\right\Vert _{L^{2}\left( \Omega \right) \times
H^{-1}\left( \Omega \right) } \\
&\leq &C\left\Vert (\hat{\phi}^{0},\hat{\phi}^{1})\right\Vert
_{H_{0}^{1}\left( \Omega \right) \times L^{2}\left( \Omega \right) }\left(
\left\Vert (u^{0},u^{1})\right\Vert _{L^{2}\left( \Omega \right) \times
H^{-1}\left( \Omega \right) }+\left\Vert (u^0_T,u^1_T)\right\Vert
_{L^{2}\left( \Omega \right) \times H^{-1}\left( \Omega \right) }\right) \\
&\leq &\frac{C}{\sqrt{C_{1}}}\left( \int_{0}^{T}\int_{\Gamma _{0}}\eta
(t)\left\vert \int_{\Upsilon}a(\sigma)\frac{\partial \phi }{\partial
\upsilon }d\sigma \right\vert ^{2}d\gamma dt\right) ^{\frac{1}{2}}\left(
\left\Vert (u^{0},u^{1})\right\Vert _{L^{2}\left( \Omega \right) \times
H^{-1}\left( \Omega \right) }+\left\Vert (u^0_T,u^1_T)\right\Vert
_{L^{2}\left( \Omega \right) \times H^{-1}\left( \Omega \right) }\right)
\end{eqnarray*}%
Dividing by $\left( \int_{0}^{T}\int_{\Gamma _{0}}\eta (t)\left\vert
\int_{\Upsilon}a(\sigma)\dfrac{\partial \phi }{\partial \upsilon }%
d\sigma \right\vert ^{2}d\gamma dt\right) ^{\frac{1}{2}}$ we easily obtain (\ref%
{eq.prop4}).
\end{proof}

\section{Numerical approximation of the control problem} \label{sec4}

In this section we introduce a suitable discretization of the control
problem (\ref{eq.1}) that we use later to find numerical approximations of
the controls. {Unlike previous works (see \cite{BBL}, \cite{B1}), where the projection method is applied directly to the variational formulation of the control problem (\ref{eq.7}), we follow a more natural strategy which consists in applying the projection operator to the original control system (\ref{eq.1}). In this way, we obtain a discrete control problem whose controls approximate the continuous one. In the next section we prove that both strategies are equivalent.}

Let $\{ w_k\}_{k\in \mathbb{N}}$ be an orthonormal basis of $L^2(\Omega)$
constituted by eigenfunctions of the Laplace operator with Dirichlet
boundary conditions, $-\Delta_D$, and $\{\lambda_k^2\}_{k\in \mathbb{N}}$
the corresponding eigenvalues. We assume that they are ordered increasingly,
i.e. 
$
0<\lambda_1^2<\lambda_2^2 \leq \lambda_3^2 \leq,...
$
We define the scaled spaces $\mathcal{H}^\alpha(\Omega)$ as 
\begin{equation*}
\mathcal{H}^\alpha =\left\{ \sum_{k=1}^\infty c_k w_k(x), \quad \mbox{ with }
\sum_{k=1}^\infty \lambda_k^{2\alpha} |c_k|^{2} <\infty \right\}.
\end{equation*}
Note that $\mathcal{H}^0=L^2(\Omega)$, $\mathcal{H}^1=H^1_0(\Omega)$ and $\mathcal{H}^{-1}=H^{-1}(\Omega)$.

We also introduce the finite dimensional space $X^N$ generated by the first $%
N$ eigenfunctions, i.e. 
\begin{equation*}
X^N=\left\{ \sum_{k=1}^N c_k w_k(x), \quad \mbox{ with }
c_k\in \mathbb{R} \right\} \subset \mathcal{H}^\alpha,\quad \alpha=-1,0,1.
\end{equation*}
and the projection operator 
\begin{equation*}
P^N:L^2(\Omega) \to X^N,
\end{equation*}
defined by 
\begin{equation*}
P^N(u)=\sum_{k=1}^N u_k w_k(x), \quad \mbox{ for } u=\sum_{k=1}^\infty u_k
w_k(x).
\end{equation*}
Clearly, this operator can be extended to the analogous projection in $%
\mathcal{H}^{\alpha} $ for $\alpha=-1,1$, that we still denote by $P^N$.

The idea is to project system (\ref{eq.1}) into the finite dimensional space 
$X^N$, but we first transform this system to introduce a more suitable
representation of the boundary control.

For each $t\in [0,T]$ we introduce the following elliptic problem with the control at the boundary,
\begin{equation}
\left\{ 
\begin{array}{ll}
\Delta h=0, & x\in \Omega, \\ 
h=f \chi _{\Gamma _{0}}, & x\in \Gamma_0, \\ 
h=0, & x\in \Gamma \backslash \Gamma_0 .%
\end{array}%
\right.
\end{equation}

Then, we have at least formally that $v=\Delta _{D}^{-1}u$ satisfies the
system 
\begin{equation}
\left\{ 
\begin{array}{l}
v_{tt}-a(\sigma )\Delta v=-a(\sigma)h(x,t) \\ 
v=0 \\ 
v(x,0)=v^{0},\qquad v_{t}(x,0)=v^{1},%
\end{array}%
\begin{array}{c}
in\text{ }Q \\ 
on\text{ }\Sigma \\ 
in\text{ }\Omega%
\end{array}%
\right.  \label{eq.v}
\end{equation}%
where $(v^{0},v^{1})=(\Delta _{D}^{-1}u^{0},\Delta _{D}^{-1}u^{1})$. Now, we
apply the projection operator to system (\ref{eq.v}). Taking into account
that the Laplace operator conmmutes with $P^{N}$ and leave invariant the
subspace $X^{N}$ we easily obtain the finite dimensional system%
\begin{equation}
\left\{ 
\begin{array}{l}
v_{tt}^{N}-a(\sigma )D^{N}v^{N}=-a(\sigma)P^{N}h(x,t) \\ 
v^{N}\in X^{N} \\ 
v^{N}(x,0)=v^{0,N},v_{t}^{N}(x,0)=v^{1,N}%
\end{array}%
\begin{array}{c}
in\text{ }Q \\ 
for\text{ }t\in \lbrack 0,t] \\ 
in\text{ }\Omega%
\end{array}%
\right.
\end{equation}%
where $(v^{0,N},v^{1,N})=(P^{N}\Delta _{D}^{-1}u^{0},P^{N}\Delta
_{D}^{-1}u^{1})$ and $D^{N}=P^{N}\Delta :X^{N}\rightarrow X^{N}$. Finally,
we apply the operator $D^{N}$, and we obtain for the new variable $%
u^{N}=D^{N}v^{N}$ the system 
\begin{equation}
\left\{ 
\begin{array}{l}
u_{tt}^{N}-a(\sigma )D^{N}u^{N}=-a(\sigma)D^{N}P^{N}h(x,t),\\ 
u^{N}\in X^{N}, \\ 
u^{N}(x,0)=u^{0,N},u_{t}^{N}(x,0)=u^{1,N},%
\end{array}%
\begin{array}{c}
in\text{ }Q, \\ 
for\text{ }t\in \lbrack 0,T], \\ 
in\text{ }\Omega,%
\end{array}%
\right.  \label{eq.20}
\end{equation}%
\begin{equation}
\left\{ 
\begin{array}{ll}
\Delta h=0, & x\in \Omega , \\ 
h=f\chi _{\Gamma _{0}}, & x\in \Gamma _{0}, \\ 
h=0, & x\in \Gamma \backslash \Gamma _{0}.%
\end{array}%
\right.  \label{eq.21}
\end{equation}%
where $(u^{0,N},u^{1,N})=(P^{N}u^{0},P^{N}u^{1})$.

We adopt (\ref{eq.20})-(\ref{eq.21}) as the discrete approximation of the
control system (\ref{eq.1}). More precisely, the discrete control problem
reads: Given $T>0$, $(u^{0,N},u^{1,N})\in X^{N}\times X^N$ and a target $%
(u^{0,N}_T,u^{1,N}_T) \in X^{N}\times X^{N}$, find a control $f^N\in
L^2(0,T;\Gamma_0)$ such that the solution of (\ref{eq.20})-(\ref{eq.21})
with $f=f^N$ satisfies 
\begin{equation}  \label{eq.22}
\int_\Upsilon u^N(T;\sigma) d\sigma =u^{0,N}_T,\quad \int_\Upsilon
u_t^N(T;\sigma) d\sigma =u^{1,N}_T.
\end{equation}

Note that the elliptic problem in (\ref{eq.21}) is not discretized in the
above formulation. For one dimensional problems this is not necessary since $%
h$ can be computed explicitly from the control $f$. However, in more
general problems a further discretization of the controls and this elliptic problem would be also
required. {In \cite{BBL} the authors propose a second projection of the controls in the subspace of $L^2(\Gamma_0)$ constituted by the eigenfunctions of the associated capacitance operator in the boundary. However, this requires to compute or approximate numerically this new eigenvalue problem. A simpler alternative consists in parametrizing the boundary $\Gamma_0$ (assuming some smoothness on $\Omega$) and then use a projection on the trigonometric basis associated to the parameter interval. More precisely, if we are in dimension $d=2$ and assume (to simplify) that $\Gamma_0$ can be parametrized by a single invertible $C^1$-function ${\bf r}: (0,1) \to \Gamma_0$, with $\| {\bf r}'(s)\|>\alpha>0$ for all $s\in(0,1)$, then any function $g\in L^2(\Gamma_0)$ can be associated with another function $\sqrt{\| {\bf r}'(s)\|} g({\bf r}(s))\in L^2(0,1)$. Now we can project this function on the finite dimensional space generated by the first functions of the trigonometric basis $\{\sin (j\pi s)\}_{j=1}^{M}$, $M>0 $ given, and we obtain the following discretization of $g\in L^2(\Gamma_0)$,
\begin{equation} \label{eq:bj}
g^{M}(x)=\sum_{j=1}^{M} g_j \beta_j(x), \quad g_j=2\int_0^1 \sqrt{\| {\bf r}'(s)\|} g({\bf r}(s)) \sin(j\pi s) \; ds, \quad 
\beta_j(x)=\frac{\sin(j\pi{\bf r}^{-1}(x))}{\sqrt{\| {\bf r}'({\bf r}^{-1}(x))\|}}.
\end{equation}
The control $f^N$ is then approximated by 
\begin{equation} \label{eq:ap_com}
f^{N,M}(x,t)=\sum_{j=1}^M f^{N,M}_j(t)\beta_j(x), \quad x\in \Gamma_0. 
\end{equation}
Now, for each $j=1,...,M$ we define $h_j(x)$ the solution of (\ref{eq.21}) with boundary data $\beta_j(x)$ in $\Gamma_0$, 
and replace the discrete control problem (\ref{eq.20})-(\ref{eq.21}) by 
\begin{equation}
\left\{ 
\begin{array}{l}
u_{tt}^{N}-a(\sigma )D^{N}u^{N}=-a(\sigma)\sum_{j=1}^Mf_j^{N,M}(t)D^{N}P^{N}h_j(x),\\ 
u^{N}\in X^{N}, \\ 
u^{N}(x,0)=u^{0,N},u_{t}^{N}(x,0)=u^{1,N},%
\end{array}%
\begin{array}{c}
in\text{ }Q, \\ 
for\text{ }t\in \lbrack 0,T], \\ 
in\text{ }\Omega.%
\end{array}%
\right.  \label{eq.20v2}
\end{equation}%
Here the controls are the $M$ functions $f_j^{N,M}(t)$, $j=1,...,M$. Note that in general the number of controls $M$ is  much smaller than the dimension of the control problem $N$. Observe also that one can divide $\Gamma_0$ in several curves and use a different parametrization for each one. The discrete problem has a similar structure in this case but the number of controls will be larger.}

\section{Convergence of discrete averaged controls}
\label{sec5}

We first introduce a variational characterization of the discrete controls following the same approach as in the continuous system. Let us consider the following backwards wave equation:
\begin{equation}
\left\{ 
\begin{array}{l}
\phi _{tt}^{N}-a(\sigma )D^{N}\phi ^{N}=0 \\ 
\phi ^{N}\in X^{N} \\ 
\phi ^{N}(x,T)=\phi ^{0,N},\phi _{t}^{N}(x,T)=\phi ^{1,N}%
\end{array}%
\begin{array}{c}
in\text{ }Q \\ 
for\text{ }t\in \lbrack 0,t] \\ 
in\text{ }\Omega%
\end{array}%
\right.  \label{eq.23}
\end{equation}%
where $(\varphi ^{0,N},\varphi ^{1,N})\in X^{N}\times X^N$. 

Note that, due to our discretization scheme, the solution $\phi^N$ of system (\ref{eq.23}) coincides with the solution of the continuous system (\ref{eq.6}) with the initial data $(\varphi ^{0,N},\varphi ^{1,N})\in X^{N}\times X^N$.

We also introduce the following scalar product in $X^N\times X^N$ 
\begin{equation} \label{eq_peN}
\left\langle (\phi ^{0,N},\phi ^{1,N}),(u^{0,N},u^{1,N})\right\rangle_N 
=\left\langle (\phi ^{0,N},\phi ^{1,N}),(u^{0,N},u^{1,N})\right\rangle,
\end{equation}
that coincides with the duality product 
between $L^{2}(\Omega)\times H^{-1}(\Omega )$ and 
$H_{0}^{1}(\Omega)\times L^{2}(\Omega )$ for functions in $X^N\times X^N$. 

The following result is the analogous to Lemma \ref{le.1} for the discrete control problem. 

\begin{lem}\label{le.1disc}
Assume that for $T>0$ and the data $\left( u^{0,N},u^{1,N}\right),\left( u^{0,N}_T,u^{1,N}_T\right) \in X^{N}\times X^{N}$ the 
control $f^{N}\in
L^{2}(0,T;\Gamma_0)$ makes the solution of the discrete system (\ref{eq.20})-(\ref{eq.21}) to satisfy (\ref{eq.22}). Then,
\begin{eqnarray} \nonumber
&& \int_{0}^{T}\int_{\Gamma_0 }\int_{\Upsilon }a(\sigma)\frac{\partial \phi^N}{\partial \upsilon}d\sigma f^{N}(x,t) d\gamma dt-\left\langle \left( \int_{\Upsilon }\phi^N
(\cdot ,0;\sigma )d\sigma ,\int_{\Upsilon }\phi^N _{t}(\cdot ,0;\sigma
)d\sigma \right) ,(u^{0,N},u^{1,N})\right\rangle_N \\
&&+\left\langle (\phi
^{0,N},\phi ^{1,N}),(u^{0,N}_T,u^{1,N}_T)\right\rangle_N =0 
\label{eq.cara_dis_con}
\end{eqnarray}
for all $(\phi^{N,0},\phi ^{N,1}) \in X^N\times X^N$, where $(\phi^N,\phi^N_t)$ is the solution of the discrete backwards system (\ref{eq.23}). 
\end{lem}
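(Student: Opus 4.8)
The plan is to mimic the proof of Lemma~\ref{le.1} exactly, but working entirely within the finite-dimensional space $X^N$, where everything is smooth and no density argument is needed. The key observation that makes this a near-verbatim copy is the remark already recorded in the text: the solution $\phi^N$ of the discrete backwards system~(\ref{eq.23}) coincides with the solution $\phi$ of the continuous backwards system~(\ref{eq.6}) when the final data $(\phi^{0,N},\phi^{1,N})$ lie in $X^N\times X^N$. Hence the normal derivative $\partial\phi^N/\partial\upsilon$ on $\Gamma_0$ is genuinely the normal derivative of a classical solution of the wave equation, and the boundary integrals appearing in~(\ref{eq.cara_dis_con}) are the same objects as in the continuous identity.

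First I would multiply the discrete state equation~(\ref{eq.20}) by $\phi^N$, integrate over $\Omega\times(0,T)\times\Upsilon$, and integrate by parts in $t$. The subtlety, compared to the continuous case, is that the spatial operator is now $D^N=P^N\Delta$ acting on $X^N$ rather than $\Delta$, and the forcing is $-a(\sigma)D^N P^N h$ rather than being a boundary datum. I would therefore want to exploit that $D^N$ is self-adjoint on $X^N$ with respect to the $L^2$ inner product, so that when I move it from $u^N$ onto $\phi^N$ there is no boundary remainder term generated within $X^N$. The whole boundary contribution must instead come from the forcing term involving $h$: using the definition of $h$ as the harmonic extension of $f\chi_{\Gamma_0}$ in~(\ref{eq.21}) and Green's identity, the term $\int_\Upsilon\int_\Omega a(\sigma)(D^N P^N h)\,\phi^N\,dx\,d\sigma$ should reduce to the boundary integral $\int_{\Gamma_0}\int_\Upsilon a(\sigma)\frac{\partial\phi^N}{\partial\upsilon}\,d\sigma\,f\,d\gamma$. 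This is the step I expect to be the main obstacle, since one has to track carefully how the projections $P^N$ and $D^N$ interact with the harmonic extension $h$ and with the integration by parts that converts the interior term into a boundary flux; one should use that $\phi^N\in X^N$ so that $P^N\phi^N=\phi^N$ and that $\Delta$ commutes with $P^N$ on $X^N$, together with the fact that $h$ is harmonic so $\int_\Omega \Delta\phi^N\,h\,dx$ produces exactly the boundary term by Green's formula.

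Once the interior computation is assembled, the time-boundary terms at $t=0$ and $t=T$ produce $\int_\Omega(\phi^N u^N_t-\phi^N_t u^N)\,dx$ evaluated between $0$ and $T$, integrated in $\sigma$; invoking the final conditions $\phi^N(T)=\phi^{0,N}$, $\phi^N_t(T)=\phi^{1,N}$ and the initial conditions $u^N(0)=u^{0,N}$, $u^N_t(0)=u^{1,N}$, these rearrange into the two duality pairings appearing in~(\ref{eq.cara_dis_con}). I would then substitute the controllability hypothesis~(\ref{eq.22}), namely $\int_\Upsilon u^N(T;\sigma)\,d\sigma=u^{0,N}_T$ and $\int_\Upsilon u^N_t(T;\sigma)\,d\sigma=u^{1,N}_T$, to replace the terminal average by the target data, which turns the $t=T$ contribution into $\langle(\phi^{0,N},\phi^{1,N}),(u^{0,N}_T,u^{1,N}_T)\rangle_N$. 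The $t=0$ contribution becomes the pairing against $(u^{0,N},u^{1,N})$ with the averaged initial adjoint state. Because the scalar product $\langle\cdot,\cdot\rangle_N$ defined in~(\ref{eq_peN}) agrees with the continuous duality product on $X^N\times X^N$, no reinterpretation of the pairings is required, and the resulting identity is precisely~(\ref{eq.cara_dis_con}). Finally, since all quantities lie in the finite-dimensional space $X^N$ where products and integrals are classical, the identity holds directly for every $(\phi^{0,N},\phi^{1,N})\in X^N\times X^N$ with no density argument needed, completing the proof.
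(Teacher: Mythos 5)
Your proof is correct and follows essentially the same route as the paper's: multiply (\ref{eq.20}) by $\phi^N$, integrate by parts in time so the time-boundary terms yield the two duality pairings, and convert the projected forcing term $\int_\Omega a(\sigma)(D^NP^Nh)\,\phi^N\,dx$ into the boundary flux $\int_{\Gamma_0}a(\sigma)\frac{\partial \phi^N}{\partial \upsilon}f^N\,d\gamma$ via Green's formula, using that $h$ is harmonic and $\phi^N$ vanishes on $\Gamma$. The only cosmetic difference is that you perform this key step at the operator level (self-adjointness of $D^N$ and $P^N$ plus invariance of $X^N$ under $\Delta$), whereas the paper expands $\phi^N=\sum_j\phi^N_jw_j$ and applies Green's formula mode by mode to get $\int_{\Gamma_0}\frac{\partial w_j}{\partial\upsilon}f^N\,d\gamma=-\lambda_j\int_\Omega w_jh^N\,dx$ before resumming; the two computations are identical in content.
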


\begin{proof}
Following the proof of Lemma \ref{le.1} we multiply the equation of $u^N$ in (\ref{eq.20}) by the solution of the adjoint problem $\phi^N$ and integrate by parts. We easily obtain
\begin{eqnarray*}
0&=&-\int_{0}^{T}\int_{\Omega } \int_{\Upsilon }a(\sigma)\phi
^{N}d\sigma D^{N}P^{N} h^N(x,t) dxdt-\left\langle \left( \int_{\Upsilon }\phi^N
(\cdot ,0;\sigma )d\sigma ,\int_{\Upsilon }\phi^N _{t}(\cdot ,0;\sigma
)d\sigma \right) ,(u^{0},u^{1})\right\rangle_N\\
&&+\left\langle (\phi
^{N,0},\phi ^{N,1}),(u^0_T,u^1_T)\right\rangle_N 
\end{eqnarray*}
where $h^N$ is the solution of (\ref{eq.21}) with $f=f^N$ the averaged control. 

Therefore, it is enough to prove
\begin{equation*}
\int_{0}^{T}\int_{\Gamma _{0}} \int_{\Upsilon }a(\sigma)\frac{\partial \phi
^{N}}{\partial \upsilon }d\sigma f^Nd\gamma dt=-\int_{0}^{T}\int_{\Omega } \int_{\Upsilon }a(\sigma)\phi
^{N}D^{N}P^{N}d\sigma h^{N}(x,t) \; dxdt.
\end{equation*}

Note that 
\begin{equation*}
\phi ^{N}\left( x,t,\sigma \right) =\sum_{j=1}^{N }\phi
_{j}^N(t;\sigma)w_{j}(x)
\end{equation*}%
where $w_{j}(x),\lambda _{j}$ are the eigenfunctions and eigenvalues
of the operator $-\Delta .$

Using Green formula we easily obtain,
\begin{eqnarray*}
\int_{\Gamma _{0}}\frac{\partial w_{j}}{\partial \upsilon }f^{N}d\gamma
&=&\int_{\Gamma _{0}}\frac{\partial w_{j}}{\partial \upsilon }h^{N}d\gamma \\
&=&\int_{\Omega }\Delta w_{j}h^{N}dx-\int_{\Omega }w_{j}\Delta
h^{N}dx+\int_{\Gamma _{0}}w_{j}\frac{\partial h^{N}}{\partial \upsilon }dx \\
&=&-\int_{\Omega }\lambda _{j}w_{j}h^{N}dx
\end{eqnarray*}

Then,
\begin{eqnarray*}
\int_{0}^{T}\int_{\Gamma _{0}} \int_{\Upsilon }a(\sigma)\frac{\partial \phi
^{N}}{\partial \upsilon }d\sigma f^N dxdt
&=&\sum_{j=1}^N\int_{0}^{T}\int_{\Gamma _{0}}\int_{\Upsilon }a(\sigma)\phi_j^N\frac{\partial w_j}{%
\partial \upsilon }d\sigma f^{N}dxdt \\
&=&-\int_{0}^{T}\sum_{j=1}^{N }\int_{\Upsilon }a(\sigma)\phi _{j}^N(t;\sigma)\int_{\Omega
}\lambda _{j}w_{j}h^{N}d\sigma dxdt\\
&=&-\int_{0}^{T}\int_{\Omega
}\int_{\Upsilon }a(\sigma)D^N\phi^N d\sigma h^{N} dxdt .
\end{eqnarray*}
This concludes the proof.
\end{proof}

\begin{rem}
As the solutions of systems (\ref{eq.6}) and (\ref{eq.23}) coincide,  the variational characterization of the discrete controls in (\ref{eq.cara_dis_con}) is analogous to the one associated to the continuous control problem, but for functions in the subset $X^N\times X^N\subset H^1_0(\Omega)\times L^2(\Omega)$. This means in particular, that any control of the continuous system is also a control for the discrete one. 

Of course, the reciprocal is not true. In the rest of the paper we construct a sequence of discrete controls that converges to the control that minimizes $J$, assuming that the continuous system is observable in average. 
\end{rem}

We define the following quadratic cost functional $J^{N}:X^{N}\times
X^{N}\rightarrow 
\mathbb{R}
$ by:%
\begin{eqnarray}
J^N(\phi ^{N,0},\phi ^{N,1}) &=&\frac{1}{2}\int_{0}^{T}\int_{\Gamma_0 }\eta
(t)\left\vert \int_{\Upsilon }a(\sigma)\frac{\partial \varphi ^{N}}{\partial \upsilon } d\sigma
\right\vert ^{2}dxdt  \notag \\ \notag
&&-\left\langle \left( \int_{\Upsilon }\phi^N
(\cdot ,0;\sigma )d\sigma ,\int_{\Upsilon }\phi^N _{t}(\cdot ,0;\sigma
)d\sigma \right) ,(u^{0,N},u^{1,N})\right\rangle_N\\ \label{def.JN}
&&+\left\langle (\phi
^{0,N},\phi ^{1,N}),(u^{0,N}_T,u^{1,N}_T)\right\rangle_N
\end{eqnarray}%
where $\phi ^{N}$ is the solution of (\ref{eq.23}). Once again, the fact that the solutions of systems (\ref{eq.6}) and (\ref{eq.23}) coincide for initial data in $X^N\times X^N$,  allows us to interpret $J^N$ as the restriction of $J$ to $X^N\times X^N$. In particular, existence of minimizers is guaranteed as soon as the continuous system is controllable in average. The characterization of these minimizers as controls for the finite dimensional system is straightforward, following the proof of Theorem \ref{th_1}. In particular we have the following result: 

\begin{theorem}
Let $\left( u^{0,N},u^{1,N}\right),\;  \left( u^{0,N}_T,u^{1,N}_T\right)\in X^{N}\times X^{N}$ be some discrete data and suppose that $(%
\hat{\phi}^{0,N},\hat{\phi}^{1,N})\in X^{N}\times X^{N}$ is a minimizer of $%
J^N $. If $\hat{\phi}^{N}$ is the corresponding solution of (\ref{eq.23}) with
final data $\left( \hat{\phi}^{0,N},\hat{\phi}^{1,N}\right) $ then $f^N(t)=\eta
(t)\int_{\Upsilon }\left. \frac{\partial \hat{\phi}^{N}}{\partial \upsilon} \right|_{\Gamma_0} d\sigma $ is a control
such that the solution of (\ref{eq.21}) satisfies (\ref{eq.cara_dis_con}). In particular,
\begin{equation*}
\int_{\Upsilon }u^{N}\left( x,T;\sigma \right) d\sigma=u^{0,N}_T ,\qquad \int_{\Upsilon
}u_{t}^{N}\left( x,T;\sigma \right) d\sigma =u^{1,N}_T .
\end{equation*}
\end{theorem}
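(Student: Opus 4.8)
The plan is to mirror the proof of Theorem~\ref{th_1}, replacing the continuous objects by their discrete counterparts and invoking Lemma~\ref{le.1disc} in place of Lemma~\ref{le.1}. Since the solution of (\ref{eq.23}) coincides with that of (\ref{eq.6}) for final data in $X^N\times X^N$, and $J^N$ is precisely the restriction of $J$ to $X^N\times X^N$, the existence of the minimizer $(\hat\phi^{0,N},\hat\phi^{1,N})$ is inherited from Theorem~\ref{th_2}. First I would compute the Gateaux derivative of $J^N$ at $(\hat\phi^{0,N},\hat\phi^{1,N})$ in an arbitrary direction $(\phi^{0,N},\phi^{1,N})\in X^N\times X^N$. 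Because the data-to-solution map is linear, this is the same elementary computation as in Theorem~\ref{th_1}, and it produces the bilinear term $\int_0^T\eta(t)\int_{\Gamma_0}(\int_\Upsilon a(\sigma)\partial_\upsilon\hat\phi^N d\sigma)(\int_\Upsilon a(\sigma)\partial_\upsilon\phi^N d\sigma)\,d\gamma\,dt$ together with the two linear terms appearing in (\ref{def.JN}).

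Setting this derivative to zero at the minimizer and substituting the definition $f^N=\eta(t)\int_\Upsilon a(\sigma)\partial_\upsilon\hat\phi^N|_{\Gamma_0}\,d\sigma$ into the bilinear term, I would observe that the first contribution becomes $\int_0^T\int_{\Gamma_0}\int_\Upsilon a(\sigma)\partial_\upsilon\phi^N\,d\sigma\, f^N\,d\gamma\,dt$. Hence the resulting Euler--Lagrange identity is exactly the variational characterization (\ref{eq.cara_dis_con}) of Lemma~\ref{le.1disc}, now holding for all test data $(\phi^{0,N},\phi^{1,N})\in X^N\times X^N$.

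To finish, I would read Lemma~\ref{le.1disc} backwards. The integration-by-parts identity derived in its proof is valid for this $f^N$ irrespective of whether it reaches the target, so combining that identity with (\ref{eq.cara_dis_con}) yields $\langle(\phi^{0,N},\phi^{1,N}),(\int_\Upsilon u^N_t(T;\sigma)d\sigma,\int_\Upsilon u^N(T;\sigma)d\sigma)\rangle_N=\langle(\phi^{0,N},\phi^{1,N}),(u^{0,N}_T,u^{1,N}_T)\rangle_N$ for every test datum in $X^N\times X^N$. The step I expect to require the most care is the cancellation of the test datum to recover (\ref{eq.22}) as an equality in $X^N$: this is legitimate precisely because the eigenfunctions $w_k$ are independent of $\sigma$, so each $u^N(\cdot,T;\sigma)$, and hence its average over $\Upsilon$, remains in $X^N$, while the pairing $\langle\cdot,\cdot\rangle_N$ is non-degenerate on the finite-dimensional space $X^N\times X^N$. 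Ranging over all test data therefore forces the two arguments to coincide, which is exactly (\ref{eq.22}).
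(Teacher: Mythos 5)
Your proposal is correct and takes essentially the same approach as the paper, which gives no separate proof but states just before the theorem that the characterization of minimizers as discrete controls "is straightforward, following the proof of Theorem \ref{th_1}" --- exactly your route of computing the Gateaux derivative of $J^N$, identifying the resulting Euler--Lagrange equation with (\ref{eq.cara_dis_con}), and concluding via Lemma \ref{le.1disc}. Your explicit justification of the reciprocal direction of Lemma \ref{le.1disc} (the general integration-by-parts identity plus non-degeneracy of $\langle\cdot,\cdot\rangle_N$ on the finite-dimensional space $X^N\times X^N$) supplies a detail the paper leaves implicit, and your inclusion of the factor $a(\sigma)$ in the formula for $f^N$ correctly matches $J^N$ and (\ref{eq.cara_dis_con}), whereas the theorem statement itself omits it (evidently a typo).
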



We now state the convergence result for the discrete controls. 

\begin{theorem}
Let $T>0$ be large enough in order to have averaged observability of the continuos wave equation. For a given initial data and target $(u^0,u^1),(u^0_T,u^1_T)\in L^2(\Omega)\times H^{-1}(\Omega)$, let $f$ be the averaged control of the continuous system provided by the minimizer of $J$, and $f^{N}(x,t)$ be the sequence of averaged controls obtained by minimizing the discrete functional $J^N$ for the initial data and target $(P^Nu^0,P^Nu^1),(P^Nu^0_T,P^Nu^1_T)\in X^N\times X^N$ respectively. Then 
\begin{equation} \label{eq_conv}
f^N \to f , \text{ in } L^2(0,T;\Gamma_0).
\end{equation}
\end{theorem}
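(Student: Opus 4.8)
The plan is to recognize the discrete problem as a Galerkin (Ritz) approximation of the continuous one and then invoke the classical convergence theory for coercive quadratic functionals. First I would write $J$ in the abstract form $J(\psi)=\tfrac12 a(\psi,\psi)-L(\psi)$, where $\psi=(\phi^0,\phi^1)\in\mathcal H:=H_0^1(\Omega)\times L^2(\Omega)$,
\begin{equation*}
a(\psi,\psi')=\int_0^T\eta(t)\int_{\Gamma_0}\Bigl(\int_\Upsilon a(\sigma)\tfrac{\partial\phi}{\partial\upsilon}\,d\sigma\Bigr)\Bigl(\int_\Upsilon a(\sigma)\tfrac{\partial\phi'}{\partial\upsilon}\,d\sigma\Bigr)d\gamma\,dt,
\end{equation*}
with $\phi$ (resp. $\phi'$) the adjoint solution of (\ref{eq.6}) for data $\psi$ (resp. $\psi'$), and $L$ collecting the two linear data terms in (\ref{eq.8}). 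The observability inequality (\ref{eq.10eta}) is precisely the statement $a(\psi,\psi)\ge C_1\|\psi\|_{\mathcal H}^2$, so $a$ is coercive, while the direct inequality (\ref{eq:dir_in}) gives $|a(\psi,\psi')|\le M\|\psi\|_{\mathcal H}\|\psi'\|_{\mathcal H}$ with $M=(C')^2$, so $a$ is continuous; $L$ is bounded by Lemma \ref{le.ene}. The crucial structural remark is that, because the discrete adjoint solutions of (\ref{eq.23}) coincide with the continuous ones of (\ref{eq.6}) for data in $V_N:=X^N\times X^N$, and because the duality pairing only sees the $X^N$-components (so the projected data terms satisfy $L(\psi')$ unchanged for $\psi'\in V_N$), the discrete functional $J^N$ of (\ref{def.JN}) is exactly the restriction $J|_{V_N}$. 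Hence its minimizer $\hat\psi_N=(\hat\phi^{0,N},\hat\phi^{1,N})$ is the Galerkin solution, characterized by $a(\hat\psi_N,\psi_N')=L(\psi_N')$ for all $\psi_N'\in V_N$, while the continuous minimizer $\hat\psi=(\hat\phi^0,\hat\phi^1)$ satisfies $a(\hat\psi,\psi')=L(\psi')$ for all $\psi'\in\mathcal H$.

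Subtracting these identities over $V_N$ yields Galerkin orthogonality $a(\hat\psi-\hat\psi_N,\psi_N')=0$ for all $\psi_N'\in V_N$, and Céa's lemma then produces the $N$-independent bound
\begin{equation*}
\|\hat\psi-\hat\psi_N\|_{\mathcal H}\le \frac{M}{C_1}\inf_{\psi_N'\in V_N}\|\hat\psi-\psi_N'\|_{\mathcal H}.
\end{equation*}
To show the right-hand side vanishes I would use the spectral projection $P^N$: by the very definition of the scaled space $\mathcal H^1=H_0^1(\Omega)$, the truncations $(P^N\hat\phi^0,P^N\hat\phi^1)\in V_N$ converge to $\hat\psi$ in $\mathcal H$, so the infimum is dominated by $\|\hat\psi-(P^N\hat\phi^0,P^N\hat\phi^1)\|_{\mathcal H}\to 0$. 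Consequently $\hat\psi_N\to\hat\psi$ strongly in $H_0^1(\Omega)\times L^2(\Omega)$.

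Finally I would transfer this to the controls. Since $f=\eta(t)\int_\Upsilon a(\sigma)\frac{\partial\hat\phi}{\partial\upsilon}_{|\Gamma_0}d\sigma$ and $f^N=\eta(t)\int_\Upsilon a(\sigma)\frac{\partial\hat\phi^N}{\partial\upsilon}_{|\Gamma_0}d\sigma$ (the latter being a genuine expression because the discrete and continuous adjoints coincide on $V_N$), linearity together with the direct inequality (\ref{eq:dir_in}) applied to $\hat\psi-\hat\psi_N$ gives
\begin{equation*}
\|f-f^N\|_{L^2((0,T)\times\Gamma_0)}\le C'\,\|\hat\psi-\hat\psi_N\|_{\mathcal H}\longrightarrow 0,
\end{equation*}
which is (\ref{eq_conv}). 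The step requiring the most care is the uniformity in $N$ of the coercivity constant: Céa's estimate is useful only because the \emph{same} observability constant $C_1$ of (\ref{eq.10eta}) bounds $a$ from below on every subspace $V_N$, a property inherited for free from coercivity on the whole space $\mathcal H$, which is exactly where the hypothesis that $T$ be large enough for averaged observability enters. The only other point to check carefully is the identification $J^N=J|_{V_N}$, namely that projecting the data and solving the discrete adjoint does not alter the functional on $V_N$; this rests on the coincidence of the adjoint dynamics noted after (\ref{eq.23}) and on the compatibility of the duality pairing with $P^N$.
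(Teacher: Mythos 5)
Your proposal is correct and follows essentially the same route as the paper: both recast the discrete problem as a Ritz--Galerkin approximation of the continuous variational equation (coercivity from the averaged observability inequality, continuity from the direct inequality and Lemma \ref{le.ene}), apply C\'ea's estimate together with the density of the spectral subspaces $X^N\times X^N$, and transfer the convergence of minimizers to the controls via the direct inequality. Your treatment is in fact slightly more explicit than the paper's on two points it leaves implicit, namely the verification that $J^N=J|_{V_N}$ despite the projection of the data, and the uniformity in $N$ of the coercivity constant.
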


\begin{proof}
We show that the numerical approximation can be stated as a Ritz-Galerkin approximation of the variational characterization of the control. The result will follow from the classical convergence result for such approximations of variational problems (see for example \cite{RT}). 

We consider the Hilbert space $V=H^1_0\times L^2$ and the following bilinear form on $V$,
$$
A((\phi^0,\phi^1),(\psi^0,\psi^1))=\int_{0}^{T}\eta (t)\int_{\Gamma _{0}}\int_{\Upsilon}a(\sigma)\frac{%
\partial {\phi}}{\partial \upsilon }d\sigma \int_{\Upsilon}a(\sigma)
\frac{\partial \psi }{\partial \upsilon }d\sigma dxdt,  
$$
where $\phi,\; \psi$ are the solutions of the adjoint system (\ref{eq.6}) with final data $(\phi^0,\phi^1), \; (\psi^0,\psi^1)$ respectively.
We also consider the linear form on $V$,
$$
L((\phi^0,\phi^1))= \left\langle
\left( \int_{\Upsilon }\phi (\cdot ,0;\sigma )d\sigma ,\int_{\Upsilon }\phi
_{t}(\cdot ,0;\sigma )d\sigma \right) ,(u^{0},u^{1})\right\rangle
-\left\langle (\phi ^{0},\phi ^{1}),(u^0_T,u^1_T)\right\rangle .
$$
The minimizer of $J$, $(\hat\phi^0,\hat\phi^1)\in V$ solves the variational equation,
$$
A((\hat\phi^0,\hat\phi^1),(\phi^0,\phi^1))=L(\phi^0,\phi^1), \quad \mbox{ for all } (\phi^0,\phi^1) \in V.
$$

Both, the bilinear form $A$ and the linear one $L$ are continuous on $V$. For the continuity of $A$ we refer to the proof of Theorem \ref{th_2} while the continuity of $L$ is a direct consequence of Lemma \ref{le.ene}. The bilinear form is also coercive, as a consequence of the averaged observability. 

Now  we consider the finite dimensional subspace of $V^N=X^N\times X^N \subset V$. {Note that, by our choice of $X^N$, the space $V^N$ becomes dense in $V$ as $N\to \infty$ in the sense that 
$$
\forall (\phi,\psi)\in V, \quad \lim_{N\to \infty} \inf_{(\phi^N,\psi^N)\in V^N} \| (\phi,\psi) -(\phi^N,\psi^N) \|_V=0. 
$$}
We only have to prove that the minimizer of the discrete functional $J^N$, $(\hat\phi^{0,N},\hat\phi^{1,N})\in V^N$ is solution of 
\begin{equation} \label{eq:var_fun}
A((\hat\phi^{0,N},\hat\phi^{1,N}),(\phi^{0,N},\phi^{1,N}))=L(\phi^{0,N},\phi^{1,N}), \quad \mbox{ for all } (\phi^{0,N},\phi^{1,N}) \in V^N.
\end{equation}

But this is straightforward from the variational characterization of the minimizers for $J^N$, Lemma \ref{le.1disc}, formula (\ref{eq_peN}) and the fact that the solutions of the adjoint systems (\ref{eq.23}) and (\ref{eq.6}) coincide for final data $(\phi^{0,N},\phi^{1,N})\in V^N$.

According to the Ritz-Galerkin convergence result we have the following estimate for the minimizers of $J$ and $J^N$
\begin{equation} \label{eq.smo}
\|(\hat\phi^{0},\hat\phi^{1})-(\hat\phi^{0,N},\hat\phi^{1,N})\|_V \leq C \inf_{(\psi^0,\psi^1)\in V^N} \| (\hat\phi^{0},\hat\phi^{1})-(\psi^0,\psi^1)\|_V, 
\end{equation}
for some constant $C$ independent of $N$. The classical approximation result for spectral projections gives the convergence of the right hand side as $N\to \infty$, and therefore the convergence of minimizers. 

Finally, the $L^2-$convergence of controls is a direct consequence of the continuity of the bilinear form $A$. 
\end{proof}

{
\begin{rem}
Convergence rates in (\ref{eq_conv}) can be easily obtained for smooth initial data. In fact, as pointed out in \cite{Lebeau,EZ2} for a single wave equation one can produce smoother controls for smoother initial data. In particular, with initial data $(u^0,u^1)\in \mathcal{H}^\alpha \times H^{\alpha-1}$ one can modify the functional $J$ to recover minimizers that satisfy $(\hat\phi^{0},\hat\phi^{1})\in \mathcal{H}^{\alpha+1} \times \mathcal{H}^{\alpha}$ (see \cite{EZ2}). We only have to add to the functional $J$ a suitable compact support cutoff function $\tilde \eta(x) \in C^\infty( \Gamma_0)$ to guarantee that the control is compact support both in time and space. Following \cite{EZ2}, it is straightforward to check that the same property holds for averaged controls. In this case the right hand side in (\ref{eq.smo}) can be estimated by 
$$
\inf_{(\psi^0,\psi^1)\in V^N} \| (\hat\phi^{0},\hat\phi^{1})-(\psi^0,\psi^1)\|_V\leq \lambda^{-\alpha}_N \inf_{(\psi^0,\psi^1)\in V^N} \| (\hat\phi^{0},\hat\phi^{1})-(\psi^0,\psi^1)\|_{\mathcal{H}^{\alpha+1} \times \mathcal{H}^{\alpha}},
$$ 
which is of the order $N^{-\alpha/d}$, according to the Weyl asymptotic formula for the Dirichlet Laplacian eigenvalues. Therefore, we obtain
$$
\|f^N-f\|_{L^2(0,T;\Gamma_0)} \leq C N^{-\alpha/d}.
$$
This remark was already observed in \cite{B1} for $\alpha=1$.
\end{rem}
}

{
An analogous convergence result can be obtained for the discrete control problem (\ref{eq.20v2}) following the ideas in \cite{BBL}. To simplify, we restrict ourselves to the case where $\Gamma_0$ is parametrized by a unique function ${\bf r}:(0,1)\to \Gamma_0$. A straightforward calculus shows that in this case we can define a class of controls $(f^{N,M}_j)_{j=1}^M$ in the form  
\begin{equation} \label{eq_fNM}
f^{N,M}_j(t)=  \int_{0}^1 \int_\Upsilon a(\sigma)\frac{%
\partial {\hat \phi^{N,M}}}{\partial \upsilon }({\bf r}(s),t) \; |{\bf r}'(s)|^{1/2} \sin(j\pi s) \; d\sigma \; ds ,  
\end{equation}
where $\hat \phi^{N,M}$ is the solution of the adjoint equation (\ref{eq.6}) with final data $(\hat \phi^{0,N,M},\hat \phi^{1,N,M})$. This final data is the solution of a new variational equation, analogous to (\ref{eq:var_fun}), but replacing the bilinear form $A$ by, 
\begin{eqnarray*}
 A^M((\phi^0,\phi^1),(\psi^0,\psi^1))&=&\int_{0}^{T}\eta (t)\sum_{j=1}^M  \left( \int_{0}^1\int_{\Upsilon}a(\sigma)\frac{%
 \partial {\phi}}{\partial \upsilon }({\bf r}(s),t) |{\bf r}'(s)|^{1/2} \sin(j\pi s)  d\sigma \; dx\right)  
\\
&& \left(\int_{0}^1 \int_{\Upsilon}a(\sigma)
\frac{\partial \psi }{\partial \upsilon }({\bf r}(s),t) |{\bf r}'({\bf r^{-1}})|^{1/2} \sin(j\pi s) d\sigma dx \right) dt. 
\end{eqnarray*}
We prove below the convergence of solutions of both variational equations and we obtain the following result: 
\begin{theorem}Let $(f^{N,M}_j)_{j=1}^M$ be the sequence of controls defined in (\ref{eq_fNM}), $\beta_j$ the functions defined in (\ref{eq:bj}) and $f^N$ the control obtained by minimizing the functional $J^N$. Then,
$$
\left\| \sum_{j=1}^M f^{N,M}_j (t) \beta_j(x) - f^N (x,t)\right\|_{L^2(0,T;\Gamma_0)} \to 0, \mbox{ as $M\to \infty$.} 
$$
\end{theorem}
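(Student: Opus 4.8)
The plan is to read the statement as a Ritz--Galerkin perturbation on the \emph{fixed} finite-dimensional space $V^N=X^N\times X^N$, in which only the bilinear form is modified. Both final data $(\hat\phi^{0,N},\hat\phi^{1,N})$ and $(\hat\phi^{0,N,M},\hat\phi^{1,N,M})$ solve variational equations on the same $V^N$, with the same linear form $L$ but with $A$ and $A^M$ respectively. Introducing the isometry $U:L^2(\Gamma_0)\to L^2(0,1)$, $(Ug)(s)=|\mathbf r'(s)|^{1/2}g(\mathbf r(s))$, and the boundary field $G_\phi(\cdot,t)=\int_\Upsilon a(\sigma)\frac{\partial\phi}{\partial\upsilon}(\cdot,t;\sigma)\,d\sigma$, one has $A(\phi,\psi)=\int_0^T\eta(t)\langle G_\phi,G_\psi\rangle_{L^2(\Gamma_0)}\,dt$, while $A^M$ is exactly $A$ with the orthogonal projection $\Pi_M=U^{*}P_MU$ of $L^2(\Gamma_0)$ onto $\mathrm{span}\{\beta_j\}_{j=1}^M$ inserted, namely $A^M(\phi,\psi)=\int_0^T\eta(t)\langle \Pi_M G_\phi,G_\psi\rangle_{L^2(\Gamma_0)}\,dt$ (using that $\Pi_M$ is a self-adjoint projection and that the $\beta_j$ are, after normalization, the $U^{*}$-images of the orthonormal sine basis). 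Thus the whole question reduces to a perturbation of the boundary inner product by $\Pi_M$.

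The key step I would carry out first is the consistency estimate $A^M\to A$ in the operator norm of $V^N$. For final data in $V^N$ the adjoint solution is $\phi^N(\cdot,t;\sigma)=\sum_{k=1}^N\phi_k^N(t;\sigma)w_k$, so for every $t$ the field $G_\phi(\cdot,t)$ lies in the \emph{fixed} finite-dimensional subspace $W^N=\mathrm{span}\{\partial w_k/\partial\upsilon|_{\Gamma_0}\}_{k=1}^N\subset L^2(\Gamma_0)$. Since the sines are complete in $L^2(0,1)$, the projections satisfy $\Pi_M\to I$ strongly, and on the finite-dimensional space $W^N$ this convergence is \emph{uniform}: $\varepsilon_M:=\|(I-\Pi_M)|_{W^N}\|\to0$. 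Combining this with the direct (regularity) inequality used in (\ref{eq:dir_in}), which bounds $\int_0^T\|G_\phi(\cdot,t)\|_{L^2(\Gamma_0)}^2\,dt$ by $C\|(\phi^0,\phi^1)\|_V^2$ uniformly in $\sigma$, and using $0\le\eta\le1$, the Cauchy--Schwarz inequality gives
\[
|A(\phi,\psi)-A^M(\phi,\psi)|=\Big|\int_0^T\eta(t)\langle (I-\Pi_M)G_\phi,G_\psi\rangle\,dt\Big|\le C\,\varepsilon_M\,\|(\phi^0,\phi^1)\|_V\,\|(\psi^0,\psi^1)\|_V .
\]

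With consistency in hand, convergence of the minimizers is a finite-dimensional argument. Restricting the averaged observability inequality (\ref{eq.10eta}) to $V^N$ gives coercivity of $A$ there, $A(\phi,\phi)\ge C_1\|(\phi^0,\phi^1)\|_V^2$; and since $A(\phi,\phi)-A^M(\phi,\phi)=\int_0^T\eta\,\|(I-\Pi_M)G_\phi\|^2\,dt\le C\varepsilon_M^2\|(\phi^0,\phi^1)\|_V^2$, the forms $A^M$ are coercive on $V^N$ for $M$ large, with constant bounded away from zero. This is precisely the Strang-lemma situation for a consistent family of uniformly coercive forms on a fixed space and the same right-hand side (see \cite{RT}): subtracting the variational identity (\ref{eq:var_fun}) from its $A^M$-analogue and testing with the difference of minimizers yields $\|(\hat\phi^{0,N},\hat\phi^{1,N})-(\hat\phi^{0,N,M},\hat\phi^{1,N,M})\|_V\le C\varepsilon_M\to0$. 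Equivalently, in the fixed matrix representation on $V^N$ the matrix of $A^M$ converges to the invertible matrix of $A$, so the solution converges.

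Finally I would transfer this to the controls. Writing the approximate control in (\ref{eq:ap_com})--(\ref{eq_fNM}) as $f^{N,M}=\eta\,\Pi_M G_{\hat\phi^{N,M}}$ (with the weight $\eta$ and the $\beta_j$-normalization incorporated as in the single-projection control $f^N=\eta\,G_{\hat\phi^N}$), I split
\[
f^{N,M}-f^N=\eta\,\Pi_M\big(G_{\hat\phi^{N,M}}-G_{\hat\phi^N}\big)+\eta\,(\Pi_M-I)G_{\hat\phi^N}.
\]
The first term is bounded in $L^2(0,T;\Gamma_0)$ by $\|G_{\hat\phi^{N,M}}-G_{\hat\phi^N}\|$ (as $\|\Pi_M\|\le1$), which tends to $0$ by the continuity of the map from final data to $G$ (the direct inequality) together with the minimizer convergence just proved; the second term tends to $0$ because $G_{\hat\phi^N}(\cdot,t)\in W^N$ for each $t$ and $\Pi_M\to I$ uniformly on $W^N$. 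This yields the asserted convergence. The main obstacle is the consistency step $A^M\to A$: everything reduces to it, and it relies crucially on the finite-dimensionality of the boundary trace space $W^N$---which upgrades the merely strong convergence $\Pi_M\to I$ to a uniform rate $\varepsilon_M$---and on the uniform-in-$\sigma$ direct inequality to control the $t$- and $\sigma$-integrations.
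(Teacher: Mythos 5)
Your proposal is correct, and it shares the paper's overall skeleton: reduce convergence of the controls to convergence of the final data $(\hat\phi^{0,N,M},\hat\phi^{1,N,M})\to(\hat\phi^{0,N},\hat\phi^{1,N})$ via the direct inequality (\ref{eq:dir_in}), and reduce that in turn to a consistency statement for the bilinear forms, $A^M\to A$. The execution of the consistency step, however, is genuinely different. The paper expands the boundary field $|{\bf r}'(s)|^{1/2}\int_\Upsilon a(\sigma)\frac{\partial\phi}{\partial\upsilon}({\bf r}(s),t)\,d\sigma$ in the sine basis, applies Cauchy--Schwarz to the tail $\sum_{j>M}\alpha_j(\phi)\alpha_j(\psi)$, and invokes density of the trigonometric system; moreover, it states the resulting convergence as a supremum over the unit ball of the \emph{whole} space $H^1_0\times L^2$. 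For each fixed $\phi$ the tail indeed vanishes, but uniformity of this decay over an infinite-dimensional unit ball does not follow from density alone (it would need, e.g., compactness of the map to boundary traces), so the paper's statement is loosely justified as written --- although it is only ever applied to test functions in $V^N$, where it is true. Your proof works on $V^N$ from the start: you observe that the boundary fields of adjoint solutions with data in $V^N$ lie in the fixed finite-dimensional space $W^N$ spanned by the normal derivatives of the first $N$ eigenfunctions, upgrade the strong convergence $\Pi_M\to I$ to a uniform rate $\varepsilon_M$ on $W^N$, and then run a Strang-type perturbation argument. This buys three things the paper leaves implicit or unproven: a rigorous uniform bound exactly where it is needed, an explicit rate $\varepsilon_M$ for the convergence of minimizers, and the uniform coercivity of $A^M$ on $V^N$ for $M$ large --- which is required even to know that the perturbed variational equation defining $(\hat\phi^{0,N,M},\hat\phi^{1,N,M})$ is solvable, a point the paper never addresses. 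Finally, your splitting $f^{N,M}-f^N=\eta\,\Pi_M\bigl(G_{\hat\phi^{N,M}}-G_{\hat\phi^N}\bigr)+\eta\,(\Pi_M-I)G_{\hat\phi^N}$ makes explicit the second term, which the paper's ``it is sufficient to prove'' reduction silently absorbs; it is harmless (it tends to zero since $G_{\hat\phi^N}$ is a fixed element of $W^N$), but your version accounts for it cleanly.
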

}

{
\begin{proof} 
Note that it is sufficient to prove that $|{\bf r}'(s)|^{1/2} \int_\Upsilon \frac{\partial {\hat \phi^{N,M}}}{\partial \upsilon }({\bf r}(s),t) d\sigma \to |{\bf r}'(s)|^{1/2} \int_\Upsilon \frac{\partial {\hat \phi^{N}}}{\partial \upsilon }({\bf r}(s),t) d\sigma$ in $L^2((0,1)\times (0,T))$, which is easily deduced from the convergence of the final data $(\hat \phi^{0,N,M}, \hat \phi^{1,N,M}) \to (\hat \phi^{0,N},\hat \phi^{1,N})$ in $H^1_0\times L^2$ as $M\to \infty$ and the direct inequality (\ref{eq:dir_in}).  Thus, we only have to prove that the solutions of the variational equation (\ref{eq:var_fun}) converge to those of the same equation when replacing the bilinear form $A$ by $A^M$. This is a consequence of the following convergence result for the associated bilinear forms, 
$$
\sup_{(\phi^0,\phi^1),(\psi^0,\psi^1) \in H^1_0\times L^2}\frac{| A^M((\phi^0,\phi^1),(\psi^0,\psi^1))-A((\phi^0,\phi^1),(\psi^0,\psi^1))| }{\| (\phi^0,\phi^1) \|_{H^1_0\times L^2}  \| (\psi^0,\psi^1) \|_{H^1_0\times L^2}} \to 0, \mbox{ as $N\to \infty$.}
$$
To prove this result we consider $(\phi^0,\phi^1),\; (\psi^0,\psi^1)$ of norm 1 and observe that, with the parametrization of $\Gamma_0$, 
\begin{equation} \label{eq:au1}
A((\phi^0,\phi^1),(\psi^0,\psi^1))=\int_{0}^{T}\eta (t)\int_{0}^1\int_{\Upsilon}({\bf r}(s),t)a(\sigma)\frac{%
\partial {\phi}}{\partial \upsilon }({\bf r}(s),t)d\sigma \int_{\Upsilon}a(\sigma)
\frac{\partial \psi }{\partial \upsilon }({\bf r}(s),t) d\sigma |{\bf r}'(s)| \;ds\;dt. 
\end{equation}
Now, we write for each $(\phi^0,\phi^1)\in H^1_0\times L^2$,
\begin{eqnarray*}
&& |{\bf r}'(s)|^{1/2}\int_{\Upsilon}a(\sigma)\frac{%
\partial {\phi}}{\partial \upsilon }({\bf r}(s),t)d\sigma =\sum_{j=1}^\infty \alpha_j (\phi) \sin(j\pi s), \\ 
&& \alpha_j (\phi)=\int_0^1 |{\bf r}'(s)|^{1/2}\int_{\Upsilon}a(\sigma)\frac{%
\partial {\phi}}{\partial \upsilon }({\bf r}(s),t)d\sigma \sin(j\pi s) \; ds, 
\end{eqnarray*}
and substitute in (\ref{eq:au1}) to obtain, 
$$
A((\phi^0,\phi^1),(\psi^0,\psi^1))=\int_{0}^{T}\eta (t) \sum_{j=1}^\infty \alpha_j (\phi) \alpha_j (\psi)\; dt. 
$$
Therefore, 
\begin{eqnarray*}
&& | A^M((\phi^0,\phi^1),(\psi^0,\psi^1))-A((\phi^0,\phi^1),(\psi^0,\psi^1))|= \left| \int_0^T \eta (t) \sum_{j=M+1}^\infty \alpha_j (\phi) \alpha_j (\psi)\; dt \right|\\
&& \leq \left(\int_0^T \eta (t) \sum_{j=M+1}^\infty |\alpha_j (\phi)|^2\right)^{1/2}\left(\int_0^T \eta (t) \sum_{j=M+1}^\infty |\alpha_j (\psi)|^2\right)^{1/2} .
\end{eqnarray*}
Here both terms converges to zero as $M\to \infty$ due to the direct inequality (\ref{eq:dir_in}) and the dense approximation in $L^2(0,1)$ of the trigonometric basis $\{\sin{j\pi x}\}_{j=1}^M$, as $M\to \infty$.
\end{proof}
}


\section{Matrix formulation and finite dimensional control} \label{sec6}

In this section we consider some implementation issues related with the finite dimensional approximation of the averaged control given by (\ref{eq.20})-(\ref{eq.21}). There are several ways to do that. For instance one can follow the original method in \cite{gll} where the authors compute the minimizer of the discrete functional $J^N$ with a conjugate gradient algorithm. This method should be adapted to the present setting with the averaged control. A more direct approach is to write a matrix formulation of the discrete problem and compute the control using the explicit expression given for the finite dimensional control theory (see \cite{Zua1}). From the practical point of view this second approach requires more computing time but it has a simpler implementation. In this paper we follow this second approach. 
 We divide this section in two subsections where we consider separately the 1-d and 2-d wave equations respectively.  

\subsection{The 1-d wave equation}

We assume $\Omega =(0,1)$ and that the control acts at the extreme $x=1$. The eigenfunctions of the Laplace operator in this domain are given by $%
w_{i}(x)=\sqrt{2}\sin (i\pi x)$ with $i\in \mathbb{N}$ and the corresponding
eigenvalues are $\lambda _{i}^{2}=i^{2}\pi ^{2}$. Note that $w_{i}$ are
normalized in the $L^{2}(\Omega )$ norm. In particular, in this case $X^{N}$ is the
subspace generated by those eigenfunctions $w_{i}$ with $i\leq N$ and the
dimension of $X^{N}$ is therefore $N$.

The function $h^{N}(x)$, solution of (\ref{eq.20}), can be
computed explicitly, 
\begin{equation}
h^{N}(x)=xf^N(t)
\end{equation}%
and 
\begin{equation*}
P^{N}h^{N}(x)=f^{N}(t) P^Nx=f^{N}(t) \sum_{i=1}^N \frac{(-1)^{i+1}}{i\pi} w_i(x),
\end{equation*}

Let us denote by $\{ u_i^N(t)\}_{i=1}^N$ the components of $u^N$ in the basis of eigenfunctions of the Laplace operator. Then, system (\ref{eq.21}) is equivalent to the following system of equations for these components, 
\begin{equation*}
\left\{ 
\begin{array}{l}
(u_{i}^{N})^{\prime \prime }+a(\sigma )\lambda _{i}^{2}u_{i}^{N}=-\lambda
_{i}^{2}f^{N}(t) \frac{(-1)^{i+1}}{i\pi} , \qquad  i=1,...,N \\ 
u_{i}^{N}(0)=u_{i}^{N,0},\quad (u_{i}^{N})^{\prime }(0)=u_{i}^{N,1}, \qquad  i=1,...,N ,
\end{array}%
\right. 
\end{equation*}
with $u_i^{N,k}=\int_0^1 u^kw_i \; dx$,
that can be written in matrix form as, 
\begin{equation} \label{mat0}
\left\{ 
\begin{array}{l}
U^{\prime \prime }+a(\sigma )DU=DGf^N(t), \\ 
U(0)=U^{0},\quad U^{\prime }\left( 0\right) =U^{1}.%
\end{array}%
\right. 
\end{equation}%
where, 
\begin{equation}
U =(u_{1}^{N},u_{2}^{N},...,u_{N}^{N})^{T}, \quad
D =diag(\lambda _{1}^{N},\lambda _{2}^{N},...,\lambda _{N}^{N}), \quad
G =\left(-\frac{1}{\pi},\frac12,...,\frac{(-1)^N}{\pi N}\right)^{T},
\end{equation}
and the initial data,
\begin{equation}
U^k =(u_{1}^{N,k},u_{2}^{N,k},...,u_{N}^{N,k})^{T}, \quad k=0,1.
\end{equation}
Note that we have removed the dependence on $N$ in the notation to simplify, but all the vectors and matrixes in system (\ref{mat0}) depend on $N$. The components of the target are also written in vector form $(U_T^0,U_T^1)$ where,
\begin{equation}
U_T^k =(u_{T,1}^{N,k},u_{T,2}^{N,k},...,u_{T,N}^{N,k})^{T}, \quad k=0,1.
\end{equation}

To apply the general theory of controllability for finite dimensional systems we write  (\ref{mat0}) as a first order one
\begin{equation}
\left\{ 
\begin{array}{l}
Z^{\prime }+A(\sigma)Z=B(\sigma)f^N(t), \\ 
Z(0)=Z^{0}%
\end{array}%
\right.  \label{ode}
\end{equation}%
with $Z=\left( 
\begin{array}{c}
U\\ U^{\prime } \end{array} \right),$ $A(\sigma )=\left( 
\begin{array}{cc}
0 & I \\ 
a(\sigma )D & 0%
\end{array}%
\right) ,B(\sigma)=\dbinom{0}{a(\sigma)DG},$ $Z^0=\left( 
\begin{array}{c}
U^0\\ U^{1} \end{array} \right),$ and the target $Z_T=\left( 
\begin{array}{c}
U^0_T\\ U^{1}_T \end{array} \right).$

According to the control theory for finite dimensional systems (see \cite{Zua1}), a control that drives the initial data $Z^0$ to the target $Z_T$ is given by 
\begin{equation} \label{eq.ca}
f^N(t)=-\eta (t)\int_{\Upsilon }B^{\top }(\sigma )e^{-tA^{\top }\left( \sigma
\right) }d\sigma \; Q_{T}^{-1} \; \left( \int_{\Upsilon } e^{tA(\sigma)} Z^0 \; d\sigma -Z_T \right),
\end{equation}
where $Q_{T}$ is the  average controllability gramiam (see \cite{Zua1}) given by 
\begin{equation} \label{eq_gr}
Q_{T}=\int_{0}^{T}\eta (t)\int_{\Upsilon }e^{(T-t)A(\sigma )}B(\sigma)\; d\sigma
\int_{\Upsilon }B^{\top }(\sigma) e^{(T-t)A^{\top }(\sigma )}d\sigma dt .
\end{equation}%

The fact that $f^N(t)$ in (\ref{eq.ca}) is a control for (\ref{ode}) can be easily checked. We only have to substitute it in the solution of the system (\ref{ode}), given by 
\begin{equation}
Z(t,\sigma )=e^{A(\sigma )t}Z_{0}+\int_{0}^{t}e^{A(\sigma )(t-s)}B(\sigma) f^N(s)\; ds.
\end{equation}%
Moreover, this control is the one obtained from the minimizer of $J^N$ in (\ref{def.JN}).

The whole process can be implemented with the following algorithm:

\bigskip

{
{\bf Algorithm}
\begin{enumerate} 
\item Choose $N$ (number of Fourier nodes), 
\item Define a spacial mesh $x_i=i \Delta x$, $i=0,...,M_x$, $\Delta x=1/M_x$ and a temporal mesh $t_j=j \Delta t$, $j=0,...,M_t$, $\Delta t=T/M_t$. Choose $M_x,M_t\geq 2N$ to avoid the aliasing phenomenon in the approximation of Fourier modes.
\item Define a parameter mesh $\sigma_l=\sigma_0 + l \Delta \sigma$, $l=0,...,M_\sigma$, $\Delta \sigma=(\sigma_2-\sigma_1)/M_\sigma$.
\item From the initial data $(u^0,u^1)$ at the nodes $x_i$ approximate the vector of Fourier coefficients $Z^0$. If the initial data are piecewise continuous we can simply use the trapezoidal rule to approximate $u^k_i=\int_0^1 u^k(x)w_i(x)\; dx$, for $k=0,1$.  
\item Compute the Grammian $Q_T$ from (\ref{eq_gr}). For each $\sigma_l $, $l=0,...,M_l$, compute the matrixes $A(\sigma_l)$ and $B(\sigma_l)$. Use the matrix exponential (expm in MATLAB) and a trapezoidal rule to approximate the integrals in time and $\sigma$. 
\item Use formula (\ref{eq.ca}) to compute the control: For each time $t_j$, $j=1,...,M_t$,
\begin{enumerate} 
\item Compute $G_1=\int_{\Upsilon } e^{t_jA(\sigma)} Z^0 \; d\sigma -Z_T$
\item Solve the linear system $Q_TG_2=G_1$ to compute $G_2$.
\item Compute $f(t_j)=-\eta (t_j)\int_{\Upsilon }B^{\top }(\sigma )e^{-t_jA^{\top }\left( \sigma
\right) }d\sigma \; G_2$
\end{enumerate}
\end{enumerate}
}

\subsection{The 2-d wave equation}

In this section we consider the numerical approximation of the average
control for the 2-d wave equation in a square domain. We assume $\Omega
=(0,1)\times (0,1)$ and that the control acts in $\Gamma_0=\{1\}\times(0,1)
\cup (0,1)\times \{1\}$. For this problem the averaged control of the wave
equation is not known, for any time $T>0$. Our experiments provide a
numerical evidence that such controllability holds.

The eigenfunctions of the Laplace operator in this domain are given by $%
w_{i,j}(x_1,x_2)=4\sin(i\pi x_1)\sin(j\pi x_2)$ with $(i,j)\in \mathbb{N}%
\times \mathbb{N}$ and the corresponding eigenvalues are $%
\lambda_{ij}^2=(i^2+j^2)\pi^2$. Note that $w_{ij}$ are normalized in the $%
L^2(\Omega)$ norm and that we have used a double index to represent the
eigenpair $(\lambda_{ij},w_{ij})$. Of course this does not affect the
results of the previous section. In particular, in this case $X^N$ is the
subspace generated by those eigenfunctions $w_{ij}$ with $i,j\leq N$ and
the dimension of $X^N$ is therefore $N^2$.

Any function $u\in C([0,T];L^2(\Omega))$ can be written as 
$ u(x_1,x_2,t)=\sum_{i,j=1}^\infty u_{ij}(t) w_{ij}(x_1,x_2),
$
where $u_{ij}(t)$ are the Fourier coefficients, and the projection operator $%
P^N:L^2(\Omega) \to X^N$ is defined as 
\begin{equation*}
P^N u(x_1,x_2,t) = \sum_{i,j=1}^N u_{ij}(t) w_{ij}(x_1,x_2).
\end{equation*}

{
To discretize the control $f$ we divide $\Gamma_0$ in two parts, the right and upper  boundaries that we denote $\Gamma_0^1$ and $\Gamma_0^2$ respectively,  and parametrized by, 
\begin{eqnarray*}
{\bf r}_1: (0,1)\to \Gamma_0^1, \quad {\bf r}_1 (s)=(1,s) \in \Gamma_0^1, \\
{\bf r}_2: (0,1)\to \Gamma_0^2, \quad {\bf r}_2 (s)=(s,1) \in \Gamma_0^2.  
\end{eqnarray*}
}

{
For each basis function $\sin(j\pi s)$ in $L^2(0,1)$ we define 
$$
\beta^k_j(x)= \left\{ 
\begin{array}{ll}
\sin(j\pi {\bf r}_k^{-1}(x)), &  x\in \Gamma_0^k, \\
0, &  x\in \Gamma_0\backslash \Gamma_0^k,
\end{array}
\right. 
\quad k=1,2.
$$
According to (\ref{eq:ap_com}), we look for a discrete control in the form,
$$
f^{N}(x,t)=\sum_{j=1}^M (f_j^{1,N}(t) \beta_j^1(x)+f_j^{2,N}(t) \beta_j^2(x)),  \quad x\in \Gamma_0, \quad t\in(0,T),
$$
for some functions $f^{k,N}_j$. 
We next solve the family of elliptic problems 
$$
\left\{
\begin{array}{ll}
\Delta h =0, &\mbox{ $x$ in $\Omega$,}\\
h=\beta_j^k(x), & \mbox{ $x$ on $\Gamma_0$,}\\
h=0, & \mbox{ $x$ on $\Gamma \backslash \Gamma_0$,}
\end{array} 
\right. \quad k=1,2,\quad j=1,...,M.
$$
This give us a family of functions $h_j^k(x)$ in $\Omega$. We can compute explicitly these functions,
$$
h_j^1(x_1,x_2)=\frac{\sinh (j\pi x_{1})}{\sinh (j\pi )}\sin (j\pi x_{2}), \quad h_j^2(x_1,x_2)=\frac{\sinh (j\pi x_{2})}{\sinh (j\pi )}\sin (j\pi x_{1}), \quad j=1,...,M. 
$$
Now we compute $P^N h_j^k$,
$$
P^N h_j^1 = \sum_{i=1}^N c_{ij} w_{ij}, \quad
P^N h_i^2 = \sum_{j=1}^N d_{ij} w_{ij}, 
$$
where 
\begin{equation*}
c_{ij}=2\int_{0}^{1}\frac{\sinh (i\pi x_{1})}{\sinh (i\pi )}\sin (j\pi
x_{1})\;dx_{1}=\frac{2j(-1)^{j+1}}{(i^2+j^2)\pi}, \quad d_{ij}=c_{ij} .
\end{equation*}%
}
System (\ref{eq.20v2}) for this particular example reads, 
\begin{equation} \label{eq_fc2d}
\left\{ 
\begin{array}{l}
(u_{ij}^{N})^{\prime \prime }+a(\sigma )\lambda _{ij}^{2}u_{ij}^{N}=-a(\sigma )\lambda
_{ij}^{2}(c_{ij}f_{i}^{1,N}+d_{ji}f_{j}^{2,N}), \\ 
u_{ij}^{N}(0)=u_{ij}^{N,0},\quad (u_{ij}^{N})^{\prime }(0)=u_{ij}^{N,1}.%
\end{array}%
\right.
\end{equation}
Here we have taken $M=N$ since for a lower value of $M$ some equations would have zero second hand term, i.e. without control.

Now we write the components $u_{ij}$ in a column matrix as follows,
$$
U=(u_{11},u_{21},...,u_{N1},u_{1,2},u_{22},....,u_{N,2},...,u_{1,N},...,u_{NN})^T.
$$
A straightforward computations allows us to write system (\ref{eq_fc2d}) in the matrix form (\ref{mat0}) for suitable matrixes $D,G$ and a column vector with the Fourier coefficients of the controls 
$$
f^N(t)=(f_1^{1,N},f_2^{1,N},...,f_N^{1,N},f_1^{2,N},f_2^{2,N},...,f_N^{2,N})^T. 
$$
The rest is similar to the 1-d case.    

\section{Numerical experiments}\label{sec7}

In this section we compute the control in average in different situations, for the 1-d and 2-d wave equation in the square. In both cases we use formula (\ref{eq.ca}) for the finite dimensional systems obtained by the projection method described above. 

\subsection{The 1-d wave equation}

We consider in this section four experiments. In the first one we assume that the unknown parameter is in an interval and we obtain the control in average, that we compare with the control at a single value of the parameter. We also check that the Grammiam associated to the discrete problem is uniformly bounded by below as $N$ grows, which provides an evidence of the uniform observability. In the second experiment we analyze the behavior of the control in average for large time. In the third experiment we show the behavior of this control when the length of the interval tends to zero. In particular we observe that the control in average converges to the control of the average parameter. The fourth experiment considers a parameter in the union of two small intervals. We see that the control in average makes the solution $u(x,T)$ to approximate either of two different symmetric profiles. 

\bigskip

{\bf Experiment 1.} We first consider the one-dimensional wave equation with initial position and velocity given by $u^0(x)=1-2|x-1/2|$, $u^1(x)=0$, and the equilibrium target $u^0_T=u_1^T=0$. We take final time $T=2.5$, $a(\sigma)=\sigma$, $\sigma\in[1,2]$, the number of Fourier coefficients is $N=30$ and time step $dt=10^{-2}$. The average in $\sigma$ is computed with the trapezoidal rule and step $d\sigma=10^{-2}$. 

{In Figure \ref{fig1b} we show the control in average and the control of a single realization for comparison. We observe that both seem to have the same regularity as the initial position, i.e. continuous but not $C^1$. As the initial data is in $H^1_0 \times L^2$, the solution must be in the same space and we consider this norm. We show the time behavior of the $H^1_0\times L^{2}$-norm of the solutions to illustrate that, as expected, the averages of controls or the control of the averaged parameter do not produce a control in average.  
Finally, we have plotted the solution at time $t=T$ for a few realizations of  the parameter $\sigma$, with the control in average to illustrate the effect of the control. Note that, even if the average of solutions is zero at any $x\in[0,1]$, the solutions for different values of the parameter may be far from zero. 
In this example the velocity $u_t(x,t)$ becomes a discontinuous function in $x$, for $t>0$, even if we do not apply any control. In fact, we can see the oscillatory behavior of a trigonometric approximation for discontinuous functions in the lower right experiment in Figure \ref{fig1b}, which draws the controlled solutions $u_t(x,T;\sigma)$, for different values of $\sigma$.}

{In table \ref{tab1} we show the behavior of the norm of the control and the average of $u(x,T;\sigma)$ for $1,000$ random values of $\sigma$, when the number of Fourier coefficients $N$ grows. This gives a numerical evidence that the norm of the controls remains bounded and that the action of these controls is closer to the control in average as $N$ grows.}   

\begin{figure}%
\begin{tabular}{cc}
\includegraphics[width=8cm]{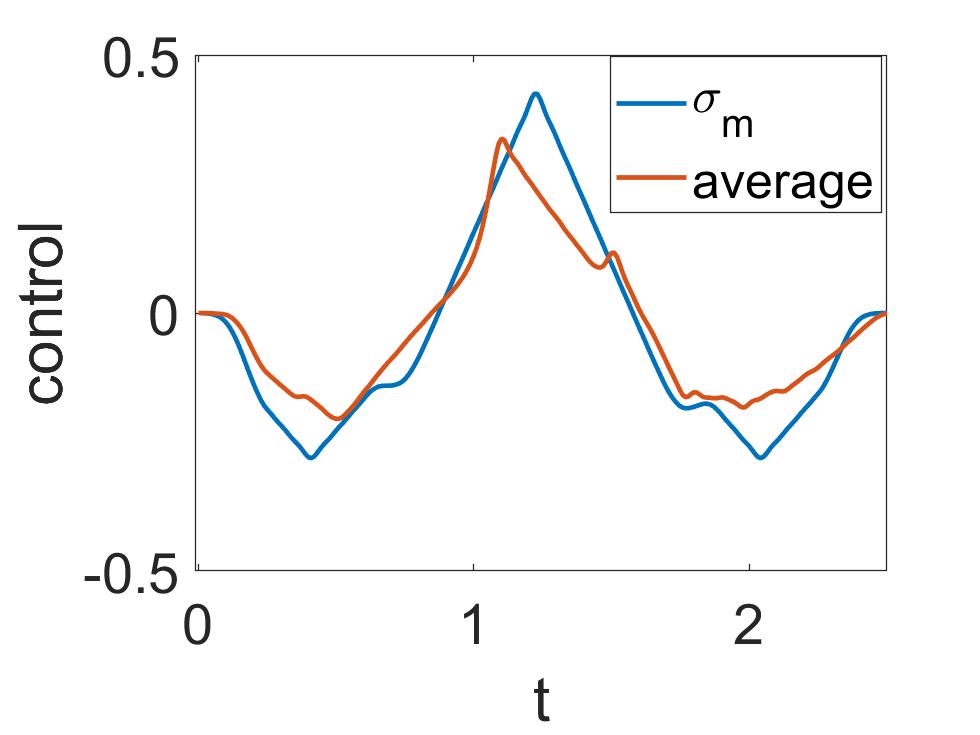}%
& \includegraphics[width=8cm]{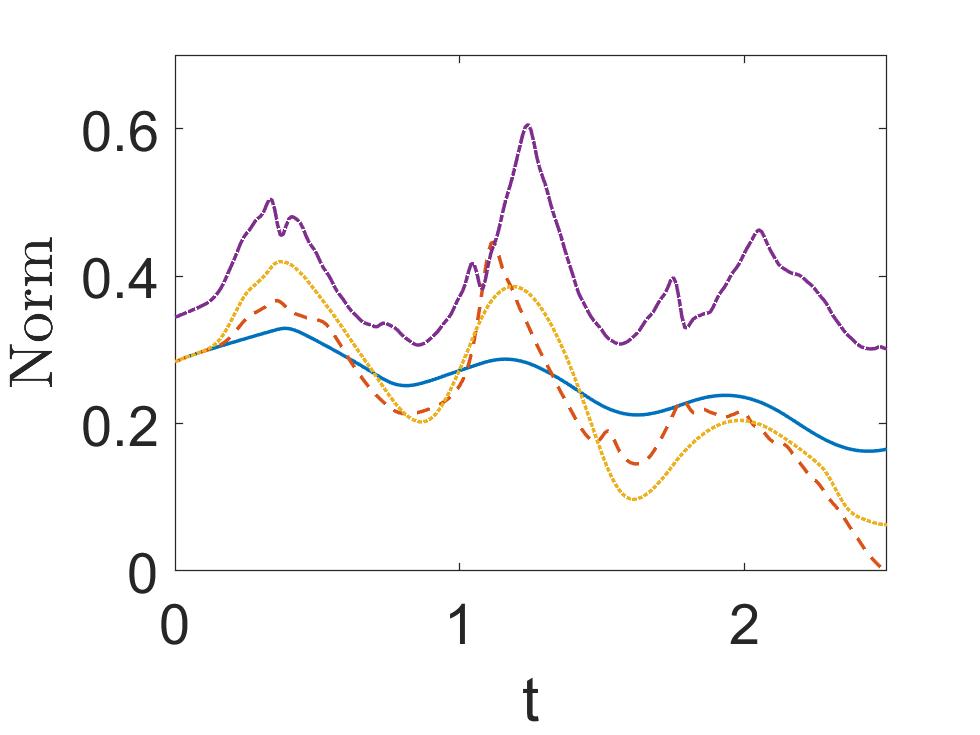}\\
Control in average and control for & Time evolution  of the $H^1_0\times L^{2}$-norm   \\
 the average parameter $\sigma_m=1.5$ &
of the solutions with different controls.
\\
\includegraphics[width=8cm]{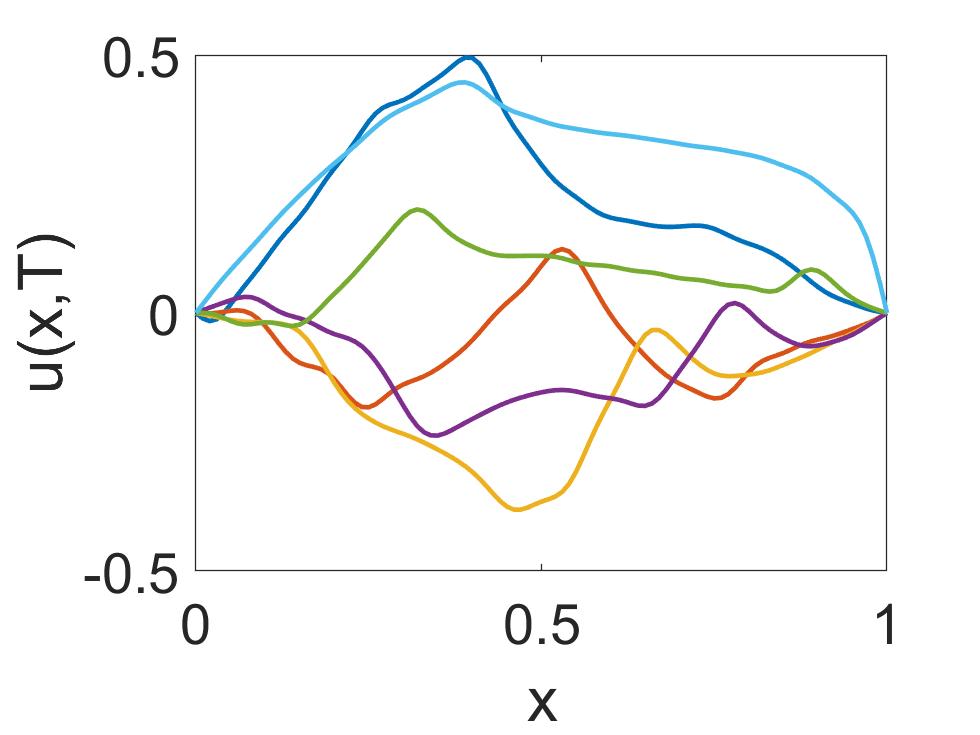}%
& \includegraphics[width=8cm]{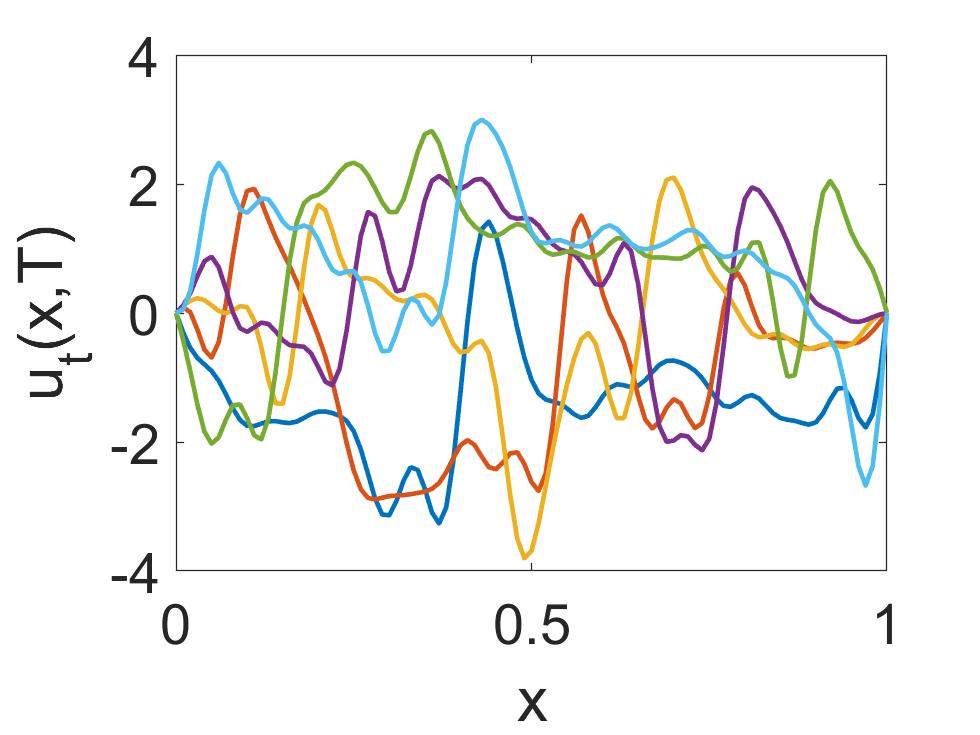}\\
Position of the controlled solutions $u(x,T)$& Velocity of the controlled solutions at $t=T$,    
\end{tabular}
\caption{Numerical results of experiment 1: the value of the unknown parameter $\sigma$ is in the interval $[1,2]$. The upper right figure compares the time evolution of the norm in the following cases: without control (solid blue), with the control in average (dash red), the average of the controls (dot yellow) and the average parameter $\sigma_m=1.5$ (dash-dot cyan).
The two lower simulations contain several controlled solutions at time $t=T$ for different values of the parameter to illustrate that, even if the average is zero in $x\in[0,1]$ when the control is active, the solutions at time $t=T$ may be far from zero. We take $\sigma=1$ (light blue), $1.2$  (dark blue), $1.4$ (green), $1.6$ (red), $1.8$  (cyan) and $2$ (yelow).}%
\label{fig1b}
\end{figure}

\begin{table}
\begin{center}
\begin{tabular}{|c|c|c|}
N & $\| \left( \int_{\Upsilon} u(x,T)d\sigma, \int_{\Upsilon} u_t(x,T)d\sigma \right) \|_{L^2\times H^{-1}}$ & $\| f\|_{L^2}$\\ \hline
$10$ & $2.3421 \times 10^{-4}$ & $2.2243 \times 10^{-1}$\\
$50$ & $3.5533 \times 10^{-5}$ & $2.2245 \times 10^{-1}$\\
$100$ & $8.8591 \times 10^{-6}$ & $2.2245 \times 10^{-1}$
\end{tabular}
\caption{Experiment 1: Norm of the control in average and target as $N$ grows \label{tab1}}
\end{center}
\end{table}

{
We finish this experiment giving a numerical evidence of the uniform bound for the observability inequality for the control in average. We compute the lowest eigenvalue of the Grammian $Q_T$ in (\ref{eq_gr}), $\alpha_1$, which corresponds to the discrete version of the observability inequality (\ref{eq.10eta}). In Figure \ref{fig1b_obs} we show that this lowest eigenvalue remains uniformly bounded as $N$ grows.
}

\begin{figure}%
\begin{center}
\includegraphics[width=8cm]{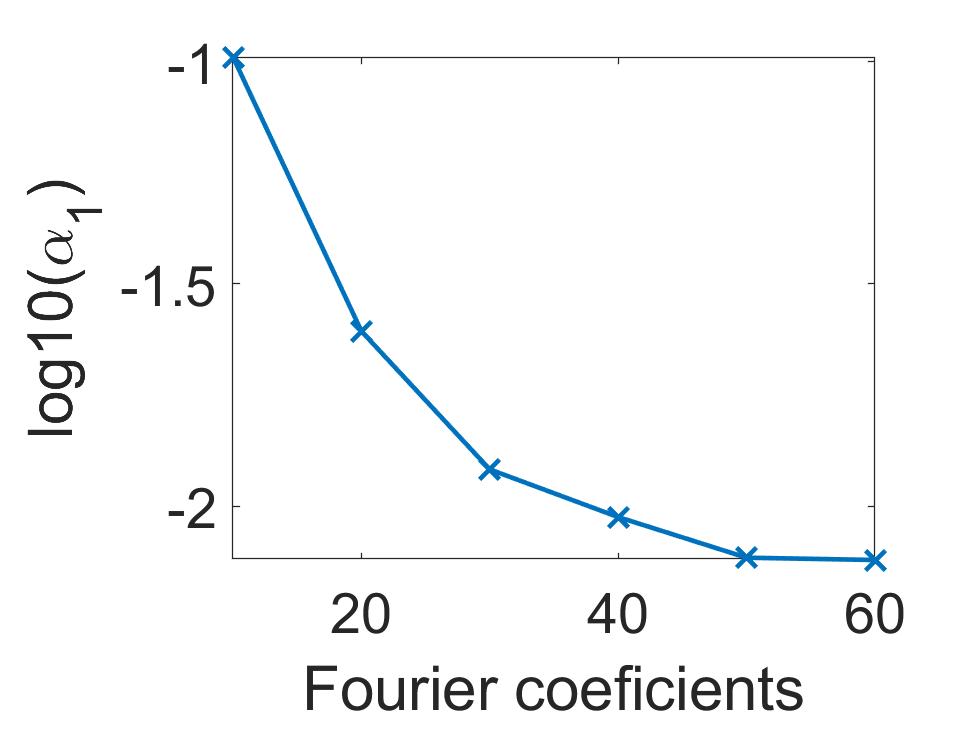}
\caption{Experiment 1: lowest eigenvalue of the discrete Grammiam $Q_T$ in terms of the number of Fourier (log10 scale).}%
\end{center}
\label{fig1b_obs}
\end{figure}

\bigskip

{
{\bf Experiment 2.} In this experiment we compare the behavior of the averaged control with the control corresponding to a single parameter (the average value $\sigma_m$) as $T$ grows. We consider the same initial data and parameters as in experiment 1. In Table \ref{tab_ex2} we compare the $L^2-$norm of both controls. We observe that, as $T$ grows, both the norm of the  averaged control and the one corresponding to a single parameter control slightly decrease. 
}

\begin{table}
\begin{center}
\begin{tabular}{|c|c|c|}
T & $\| f_{ave} \|_{L^2(0,T)}$ & $\| f_{\sigma_m}\|_{L^2}$\\ \hline
$2.5$ & $2.24 \times 10^{-1}$ & $3.09 \times 10^{-1}$\\
$20$ & $1.37 \times 10^{-2}$ & $1.06 \times 10^{-1}$\\
$40$ & $1.07 \times 10^{-2}$ & $7.46 \times 10^{-2}$
\end{tabular}
\caption{Experiment 2: Norm of the control in average and the associated to $\sigma_m=1.5$ as $T$ grows. \label{tab_ex2}}
\end{center}
\end{table}

{
In figure \ref{fig1b_T} we compare the control in average with the control for a single value of the parameter for large time $T=40$. We observe that the control in average acts at the end of the time interval. This is due to the dissipative character of the average evolution. As it is show in the lower simulations, the norm of the average decays, even without any control, to a constant value that depends on the intial data. Therefore, the control in average only have to act at the end of the time interval. 
}

\begin{figure}%
\begin{tabular}{cc}
\includegraphics[width=8cm]{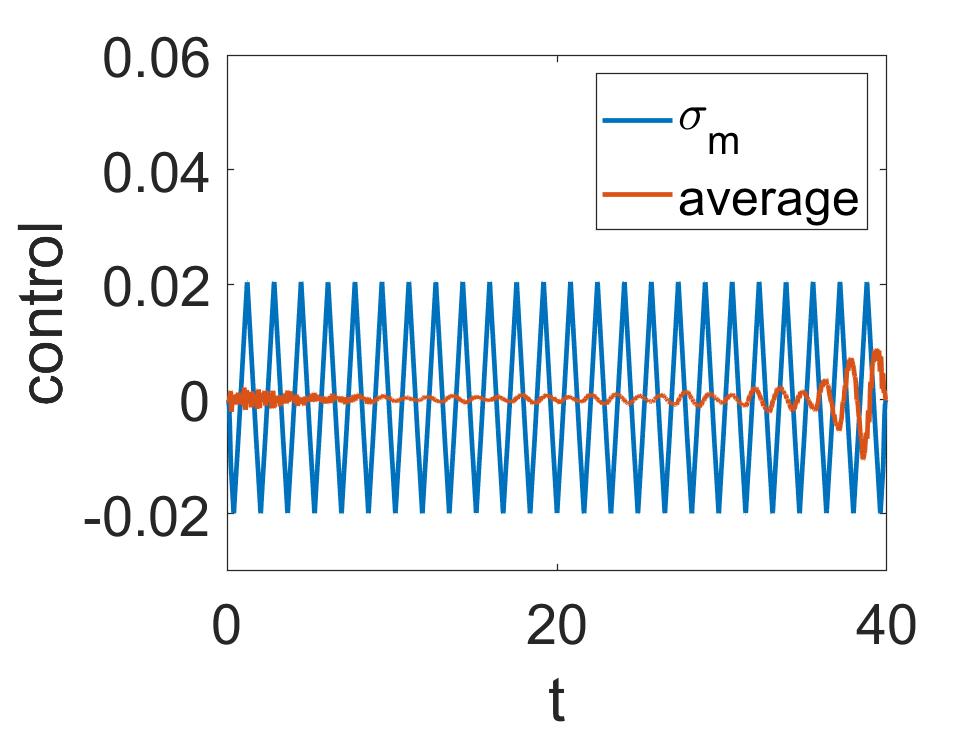}%
& \includegraphics[width=8cm]{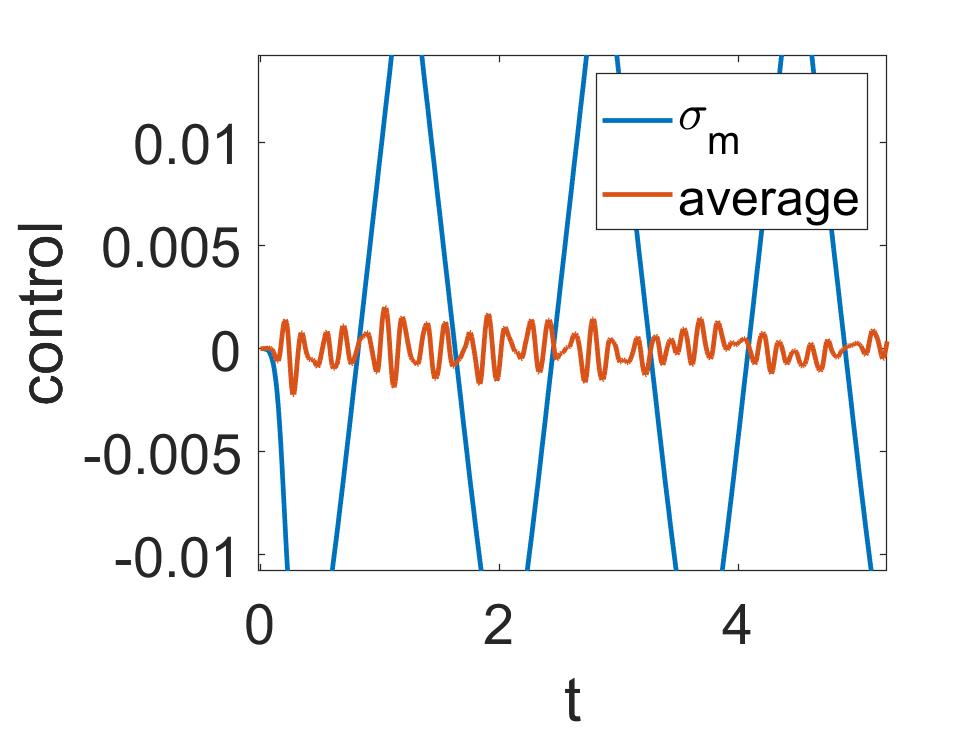}\\
Control in average vs control for $\sigma_m=1.5$& Zoom near $t=0$.\\
\includegraphics[width=8cm]{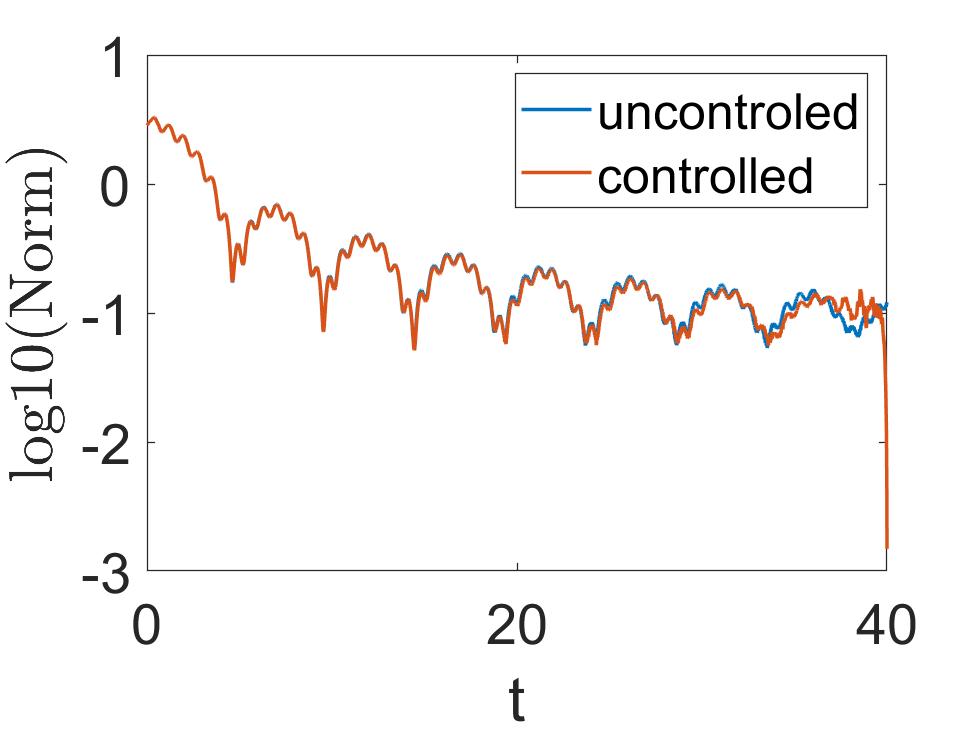}%
& \includegraphics[width=8cm]{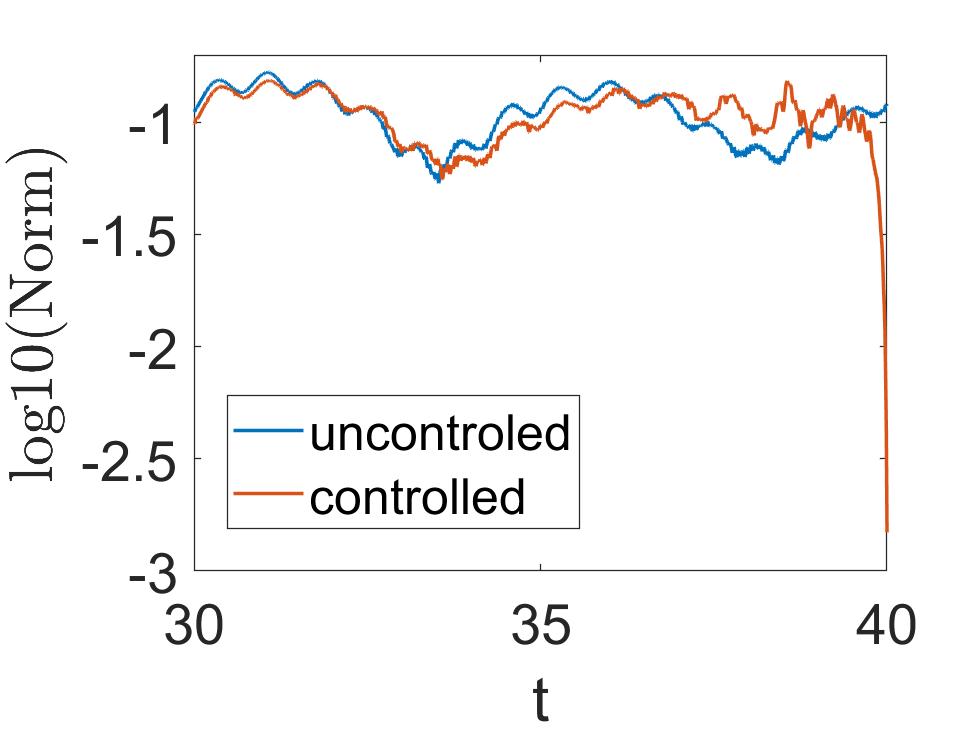}\\
Norm of the solutions & Zoom near $t=T$.
\end{tabular}
\caption{Numerical results of experiment 2: Control in average and the control for a single value of the parameter $\sigma_m=1.5$ (upper left), and a zoom near $t=0$ to illustrate the presence of oscillations in the control in average (upper right). The two low simulations show the time evolution of the $L^2\times H^{-1}$-norm of the average of some solutions (in log scale) when we apply the control in average and without control.}%
\label{fig1b_T}
\end{figure}

\bigskip

{\bf Experiment 3.} Here we illustrate the convergence of the control in average to the control of the average parameter when the set of parameters is an interval with decreasing length. In particular we consider $a(\sigma)=\sigma$, $\sigma \in \Upsilon=[1-\varepsilon,1+\varepsilon]$ for different values of $\epsilon\to 0$ and we compare the control in average with the control for $\sigma_0=1$. The average in $\sigma$ is computed with the trapezoidal rule and step $d\sigma=\varepsilon 10^{-2}$. The rest of the data are as in the experiment 1. In table \ref{tab2} we illustrate the convergence of the control in average to the control of the average parameter.

\begin{table}
\begin{center}
\begin{tabular}{|c|c|}
$\varepsilon$ &  $\| f_{ave} - f_{\sigma_0}\|_{L^2}$\\ \hline
$1/2$ & $2.0006 \times 10^{-1}$ \\
$10^{-1}$ & $2.1701 \times 10^{-2}$ \\
$10^{-2}$ & $3.8824 \times 10^{-4}$ \\
$10^{-3}$ & $3.3179 \times 10^{-6}$ 
\end{tabular}
\caption{Experiment 3: Convergence of the averaged control to the control for the averaged parameter $\sigma_0$ when the lenght of the interval $\varepsilon \to 0$. \label{tab2}}
\end{center}
\end{table}

\bigskip

{\bf Experiment 4.} Now we illustrate the behavior of the control in average when the set of parameters is the union of two intervals. In particular we consider $a(\sigma)=\sigma$, $\sigma \in \Upsilon=[1-\varepsilon,\; 1+\varepsilon]\cup [3/2-\varepsilon,\; 3/2+\varepsilon]$ with $\varepsilon=5\times 10^{-3}$. The average in $\sigma$ is computed with the trapezoidal rule and step $d\sigma=\varepsilon 10^{-2}$. The  rest of the data are as in the Experiment 1. In Figure \ref{fig2} we show the control in average and the controls for the two parameters $\sigma=1, 3/2$. We see that the averaged control exhibits some small oscillations which are not present in the other two controls. We also draw the controlled solutions at $t=T$ for some values of the parameter. We observe how the solution is controlled either to one of two symmetric final states. 

We have also considered the case where one of the intervals contains values of the parameter for which the wave equation is not controllable. In particular, when $\sigma \in \Upsilon=[1/2-\varepsilon,\; 1/2+\varepsilon]\cup [3/2-\varepsilon,\; 3/2+\varepsilon]$ the exact controllability result is not true for $\sigma \in [1/2-\varepsilon,\; 1/2+\varepsilon]$ if we maintain the time $T=2.5$. However, the averaged controllability for $\sigma\in \Upsilon$ seems to hold in this case. 

\begin{figure}%
\begin{tabular}{cc}
\includegraphics[width=8cm]{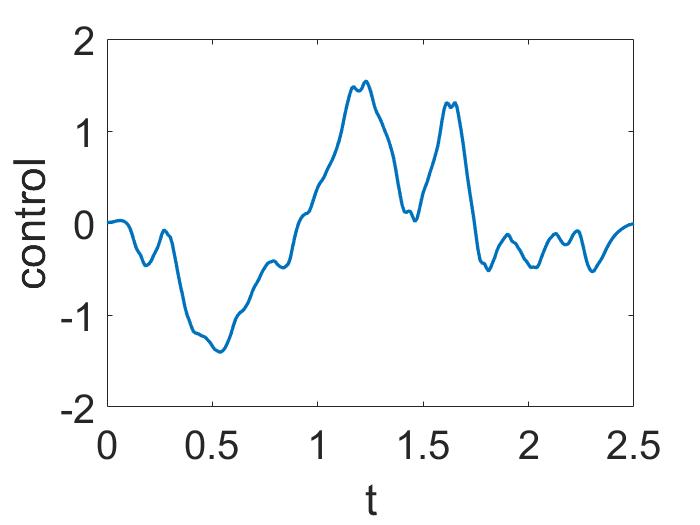}%
& \includegraphics[width=8cm]{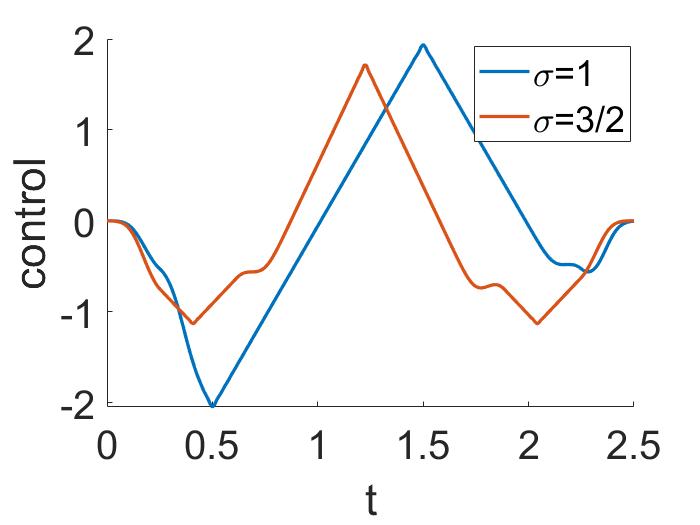}\\
Control in average & Controls for the parameters $\sigma=1,3/2$ \\   
\includegraphics[width=8cm]{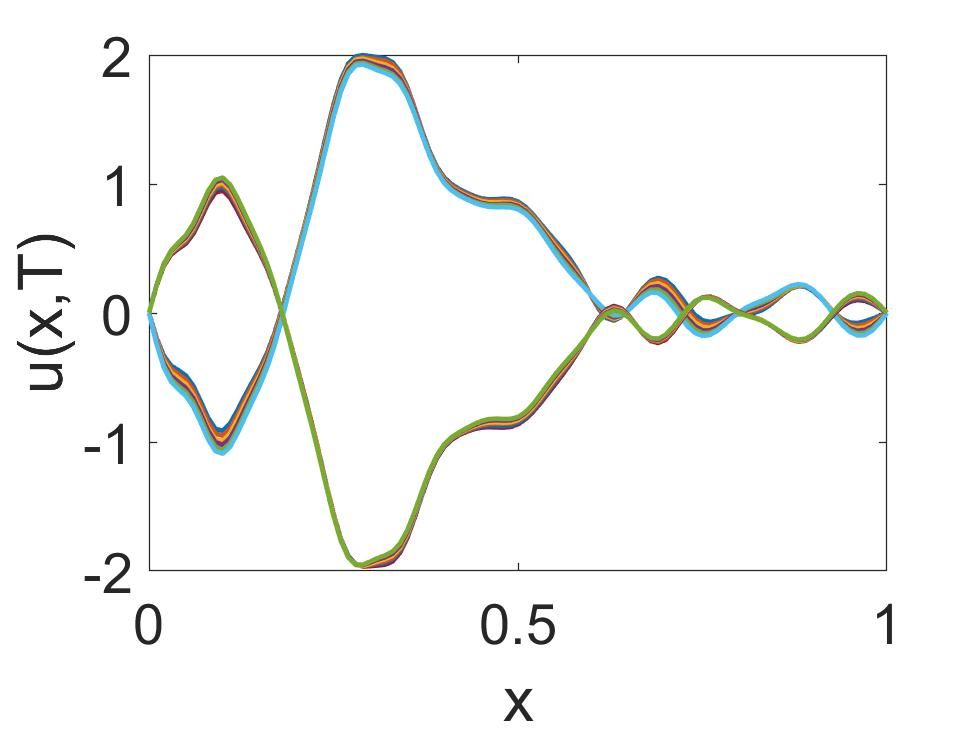}%
& \includegraphics[width=8cm]{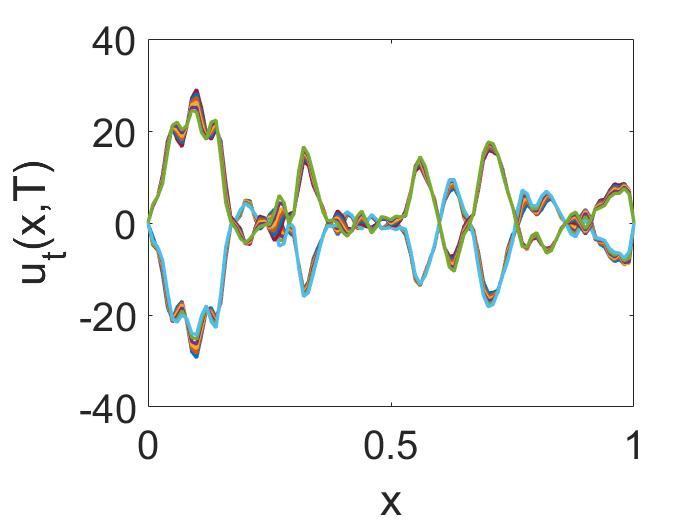}\\
Position for some controlled solutions & Velocity for some controlled solutions
\end{tabular}
\caption{Experiment 4. Here the parameter is equally distributed in two small disjoint intervals around $\sigma=1$ and $\sigma=3/2$. The control chooses to drive the solution to either of two different symmetric profiles, whose average is zero (the chosen target).}%
\label{fig2}%
\end{figure}

\begin{figure}%
\begin{tabular}{cc}
\includegraphics[width=8cm]{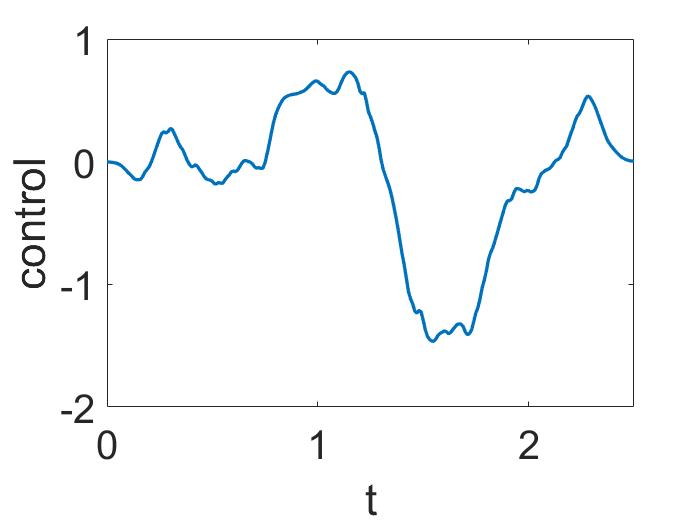}%
& \includegraphics[width=8cm]{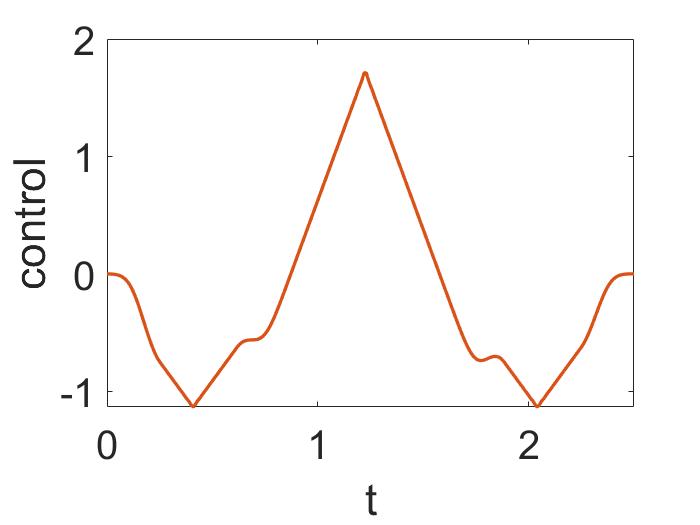}\\
Control in average & Control for the parameter $\sigma=3/2$ \\   
\includegraphics[width=8cm]{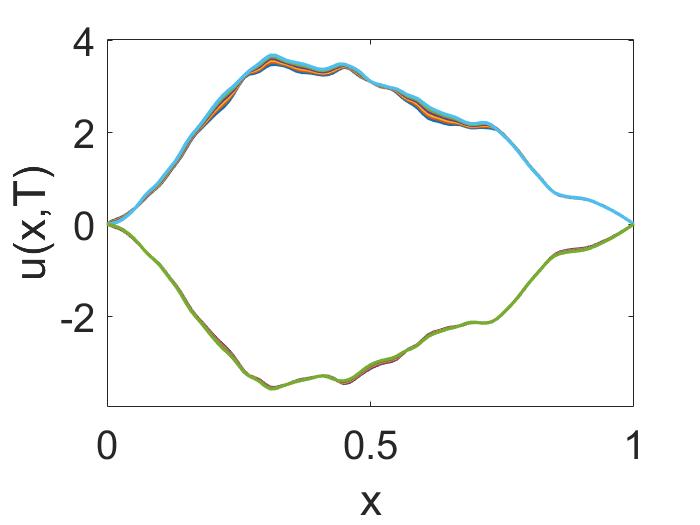}%
& \includegraphics[width=8cm]{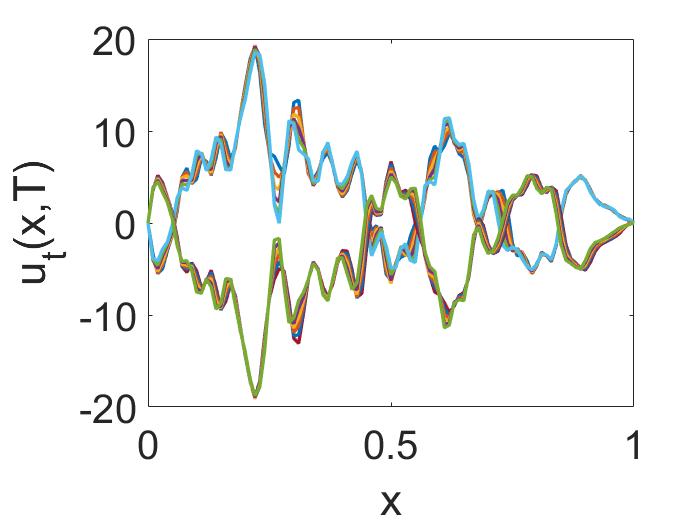}\\
Position for some controlled solutions & Velocity for some controlled solutions
\end{tabular}
\caption{Experiment 4. Here the parameter is equally distributed in two small disjoint intervals around $\sigma=1/2$ and $\sigma=3/2$. The control time is $T=2.5$ and therefore the wave equation is not controllable when $\sigma=1/2$. However, the averaged control exists.}%
\label{fig2n}%
\end{figure}

\subsection{The 2-d wave equation}

In this section we consider a single experiment that illustrate that the method can be easily adapted to multidimensional situations, as long as we know the eigenfunctions of the associated Laplace operator in the domain. 

\bigskip

{\bf Experiment 5.} We consider a square domain $[0,1]\times[0,1]$. The control acts at the two sides $\{1\}\times [0,1] \cup [0,1] \times \{ 1 \}$ in the time interval $t\in [0,2.5]$. We assume $a(\sigma)=\sigma$, $\sigma\in [3/2,2]$ and the average is computed with the trapezoidal rule with step $d\sigma=10^{-2}$. We consider the first $N=100$ Fourier coefficents. The initial data are $u^0=\max \{ x_1,(1-x_1)\} \max \{ x_2,(1-x_2)\} $  and $u^1=0$. The target is the equilibrium $u^0_T=u^1_T=0$. In Figure \ref{fig3} we have drawn the $L^2$-norm of the control acting in the two sides of the square and the evolution of the norm $\|(\int_\Upsilon u \; d\sigma,\int_\Upsilon u_t \; d\sigma)\|_{H^{1}\times L^2}$.  

\begin{figure}%
\begin{tabular}{cc}
\includegraphics[width=8cm]{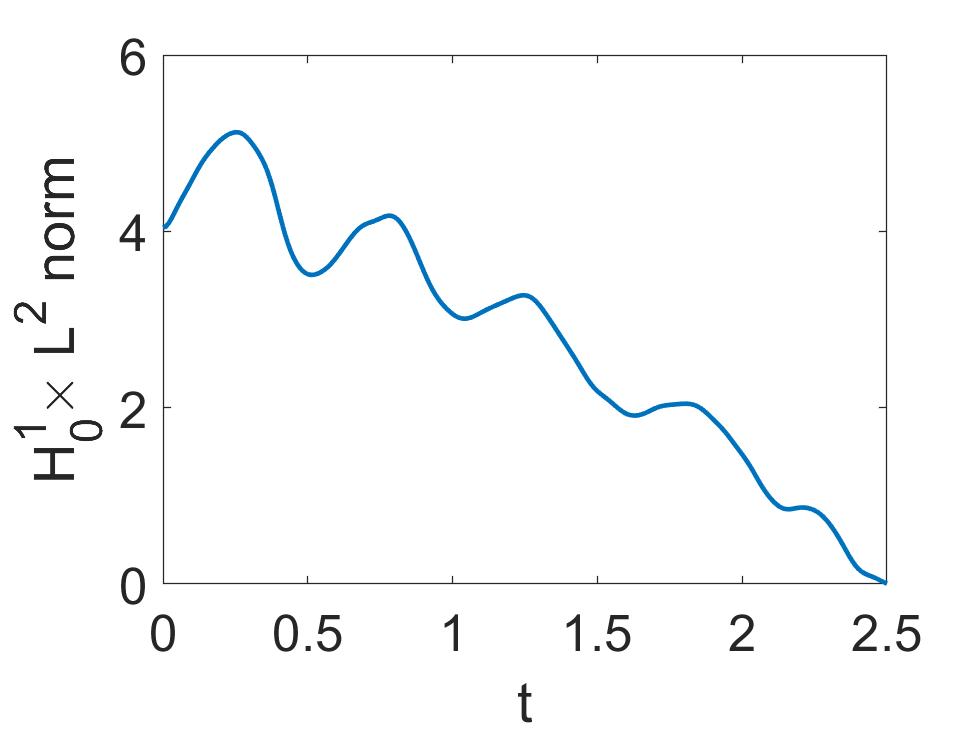}%
& \includegraphics[width=8cm]{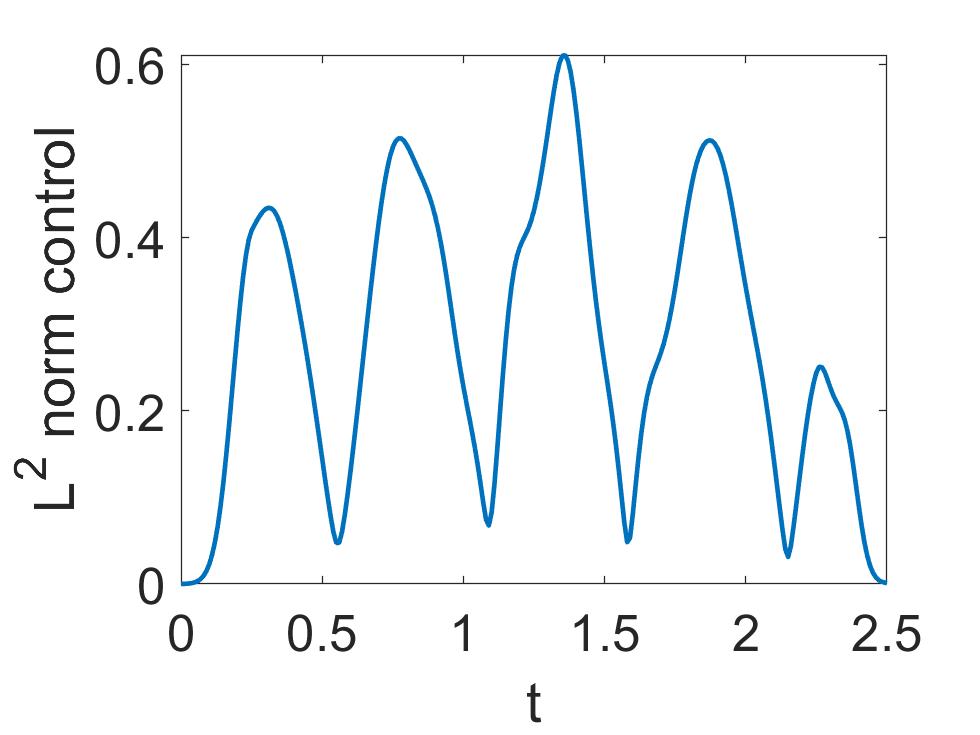}\\
 Time evolution of the norm of $(\int_\Upsilon u \; d\sigma ,\int_\Upsilon u_t \; d\sigma)$ & $L^2$-norm of the control in average  
\end{tabular}
\caption{Experiment 5 corresponding to the boundary control in a square domain. }%
\label{fig3}%
\end{figure}

\section{Conclusions}

We have applied a spectral projection method to approximate the averaged control for the wave equation.  We prove the convergence of the method and how it can be reduced to a classical finite dimensional control problem for which there is an explicit formula for the control. This provides an efficient method to compute averaged controls. We illustrate it with several numerical experiments. The method is quite general and can be easily adapted to similar situations where a variational characterization of controls exists.  

{\bf Acknowledgements.} The first author was supported by the Directorate-General for Scientific Research and Technological Development (DGRSDT) (Algeria). The second author is supported by project MTM2017-85934-C3-3-P from the MICINN (Spain). 

The second author is grateful to Jon Barcena for useful comments on the last version of this paper that contributed to improve it. The authors are also grateful to the anonymous referees for their detailed reading and comments.

\end{document}